\theoremstyle{plain}
\newtheorem{theorem}{Theorem}[section]
\newtheorem{corollary}{Corollary}[section]
\newtheorem{lemma}{Lemma}[section]
\newtheorem{remark}{\bf Remark}[section]
\theoremstyle{definition}
\newcommand{\rot}{\mathop\mathrm{rot}}
\newcommand{\grad}{\mathop\mathrm{grad}}
\renewcommand{\div}{\mathop\mathrm{div}}
\title
[Lieb--Thirring constant  on the sphere and on the torus ]
 {Lieb--Thirring constant  on the sphere and on the torus}
\author[A.Ilyin, A.Laptev, S.Zelik] {Alexei  Ilyin, Ari Laptev, and Sergey Zelik}
\begin{document}
 \begin{abstract}
 We prove on the sphere $\mathbb{S}^2$ and on the  torus
 $\mathbb{T}^2$
  the Lieb--Thirring
inequalities with improved constants  for orthonormal families of  scalar and vector functions.
\end{abstract}

\subjclass[2010]{35P15, 26D10, 35Q30.}
\keywords{Lieb--Thirring inequalities, Sphere, Torus}

\address
{\noindent\newline (A.A.\,Ilyin and S.V.\,Zelik) Keldysh Institute of Applied Mathematics;
\newline
(A.A.\,Laptev) Imperial College London and St Petersburg University
14-Ya Liniya B.o., 29B St Petersburg 199178, Russia;
\newline
(S.V.\,Zelik) School of Mathematics and Statistics, Lanzhou
University, China  and University of Surrey, Department of
Mathematics, Guildford, GU2 7XH, United Kingdom} \email{\newline
ilyin@keldysh.ru;\newline  a.laptev@imperial.ac.uk;\newline s.zelik@surrey.ac.uk}

\maketitle

\setcounter{equation}{0}
\section{Introduction}\label{sec1}
\label{sec0}
The Lieb--Thirring inequalities~\cite{LT} give estimates for $\gamma$-moments
of the negative eigenvalues of the Schr\"odinger operator $-\Delta-V$ in $L_2(\mathbb{R}^d)$,
where  $V=V(x)\ge0$:
\begin{equation}\label{LT}
\sum_{\lambda_i\le0}|\lambda_i|^\gamma\le\mathrm{L}_{\gamma,d}
 \int_{\mathbb{R}^d} V(x)^{\gamma+ d/2}dx.
\end{equation}
In the case $\gamma=1$ estimate~\eqref{LT} is equivalent to the dual
inequality
\begin{equation}\label{orth}
 \int_{\mathbb{R}^d} \rho(x)^{1+2/d}dx\le\mathrm{k}_d\sum_{j=1}^N\|\nabla\psi_j\|^2,
\end{equation}
where $\rho(x)$ is as in \eqref{rho}, and $\{\psi_j\}_{j=1}^N\in H^1(\mathbb{R}^d)$
is an arbitrary orthonormal system. Furthermore the (sharp) constants $\mathrm{k}_d$ and
 $\mathrm{L}_{1,d}$ satisfy
\begin{equation}\label{kL}
 \mathrm{k}_d=(2/d)(1+d/2)^{1+2/d}\mathrm{L}_{1,d}^{2/d}.
\end{equation}

Sharp constants in  \eqref{LT} were found in~\cite{Lap-Weid} for $\gamma\ge 3/2$,
while for a long time  the best available estimates for  $1\le\gamma<3/2$ were those  found in
\cite{D-L-L}. Very recently an important improvement in the area was made
in~\cite{Frank-Nam}, where the original idea of~\cite{Rumin2}
was developed and extended in a substantial way.

Inequality \eqref{orth} plays an important role in the theory of the
 Navier--Stokes equations
\cite{Lieb, B-V,  T}, where the constant $ \mathrm{k}_2$ enters the estimates
of the fractal dimension of the global attractors
of the Navier--Stokes system in various two-dimensional formulations.
(In the three-dimensional case the corresponding results are  of a conditional character.)

Along with the  problem in a bounded domain
$\Omega\subset\mathbb{R}^2$ with Dirichlet boundary conditions
the Navier--Stokes system is also studied with periodic boundary conditions,
that is, on a two-dimensional torus. In this case for the system to be dissipative
one has to impose the zero mean condition on the components of the velocity vector
over the torus.

Another physically relevant model  is the Navier--Stokes system on the sphere.
In this case the system is dissipative without extra orthogonality conditions.
However, if we want to study the system in the form of the scalar vorticity equation, then the
scalar stream function of a divergence free vector field is defined up
to an additive constant, and without loss of generality we can (and always) assume that
the integral of the stream function over the sphere vanishes.

We can formulate our main result as follows.

\begin{theorem}\label{Th:1}
Let ${M}$ denote either $\mathbb{S}^2$ or $\mathbb{T}^2$,
and let $\dot H^1({M})$ be the Sobolev space of functions with
mean value zero.
Let $\{\psi_j\}_{j=1}^N \in\dot H^1({M})$ be an orthonormal family
in $L_2({M})$.  Then
\begin{equation}\label{rho}
\rho(x):=\sum_{j=1}^N|\psi_j(x)|^2
\end{equation}
satisfies the inequality
\begin{equation}\label{M}
\int_{{M}}\rho(x)^2d{M}\le\mathrm{k}\sum_{j=1}^N\|\nabla\psi_j\|^2,
\end{equation}
where
$$
\mathrm{k}\le\frac{3\pi}{32}=0.2945\dots\,.
$$
\end{theorem}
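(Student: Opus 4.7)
My plan is to prove~\eqref{M} by invoking the duality between the orthonormal-system bound and the scalar Lieb--Thirring inequality~\eqref{LT} with $\gamma=1$, and then to establish the latter on $M$ by a spectral projector decomposition of the type introduced by Rumin and sharpened in~\cite{Frank-Nam}. By the formula~\eqref{kL} transplanted to $M$, proving~\eqref{M} with constant $\mathrm{k}$ is equivalent to proving the trace inequality
\[
\Tr(-\Delta-V)_-\le \mathrm{L}\int_{M}V(x)^2\,dM,\qquad \mathrm{k}=4\mathrm{L},
\]
for arbitrary nonnegative $V$, the Laplacian being understood on the mean-zero subspace $\dot L_2(M)$. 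Hence it suffices to show $\mathrm{L}\le 3\pi/128$.

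For the trace bound I would proceed as follows. Fix a threshold $E>0$ and let $\Pi_E$ denote the spectral projector of $-\Delta$ on $\dot L_2(M)$ onto eigenvalues $\le E$. Given an eigenfunction $u_j$ of $-\Delta-V$ with negative eigenvalue $-\mu_j<0$, split $u_j=\Pi_Eu_j+(I-\Pi_E)u_j$. The positivity of $-\Delta-E$ on the range of $I-\Pi_E$ together with $V\ge 0$ allows one to dominate $\sum_j\mu_j$ by an expression of the form $\int_M\Pi_E(x,x)\,\Phi(V(x),E)\,dM$ for an explicit function $\Phi$, via a Cauchy--Schwarz step applied to the almost-orthonormal system $\Pi_Eu_j$. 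An Aizenman--Lieb layer-cake optimisation in $E$ then converts this into a trace bound of the type $\Tr(-\Delta-V)_-\le C(M)\int V^2\,dM$.

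The manifold-dependent ingredient is a sharp bound on the pointwise diagonal $\Pi_E(x,x)$. On $\mathbb{S}^2$ rotation invariance gives
\[
\Pi_E(x,x)=\frac{1}{4\pi}\sum_{\ell\ge 1,\,\ell(\ell+1)\le E}(2\ell+1),
\]
and on $\mathbb{T}^2$ translation invariance gives $\Pi_E(x,x)=|\mathbb{T}^2|^{-1}\#\{k\in\mathbb{Z}^2\setminus\{0\}:|k|^2\le E\}$. Plugging these explicit counting functions into the procedure above and optimising over $E$ should yield the uniform constant $3\pi/128$.

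The main obstacle is getting the precise numerical value $3\pi/32$. One has to balance the discrete-spectrum remainder of the counting function (which dominates at low $E$) against the semiclassical contribution (which dominates at high $E$), while also verifying that removing the zero mode does not inflate the final constant. A secondary difficulty is handling $\mathbb{S}^2$ and $\mathbb{T}^2$ by a single argument: their counting functions differ in their nonasymptotic behaviour, so either two parallel optimisations must be performed with the same bound achieved in each, or one must identify a common pointwise majorant of $\Pi_E(x,x)$ that is sharp enough on both surfaces.
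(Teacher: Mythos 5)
Your high-level plan (Rumin-type spectral decomposition, pointwise heat/projector kernel bounds on $M$, duality with the trace inequality) is the right neighbourhood, but the sketch as written has a gap that would prevent it from reaching $3\pi/32$. The decomposition you describe uses a \emph{sharp} spectral projector $\Pi_E$ and splits $u_j=\Pi_E u_j+(I-\Pi_E)u_j$. This is the original Rumin scheme, and on $\mathbb{S}^2$ and $\mathbb{T}^2$ it reproduces exactly the older constant $\mathrm{k}\le 3/(2\pi)=0.477\dots$ recorded in Remark~1.1, not the claimed $3\pi/32=0.2945\dots$. The Frank--Nam improvement that the paper relies on replaces the indicator cutoff by a \emph{smooth} filter $f\colon\mathbb{R}^+\to[0,1]$ with $\int_0^\infty f(t)^2\,dt=1$, writes each eigenvalue as $\lambda=\int_0^\infty f(E/\lambda)^2\,dE$, decomposes $\psi$ through the multiplier $f(E/\lambda)$ rather than through $\Pi_E$, and then optimises over the \emph{shape} of $f$; the optimiser (already used in~\eqref{choice}) is $f(t)=(1+\pi^2 t^2/16)^{-1}$, which solves the variational problem $\int_0^\infty(1-f(t))^2 t^{-2}\,dt\to\min$ subject to $\int f^2=1$. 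Without this extra variable your ``optimisation over $E$'' has nothing to gain over the sharp cutoff, and you cannot cross from $3/(2\pi)$ down to $3\pi/32$. This is not a detail to be fixed by bookkeeping; it is the central new ingredient.

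Two further remarks. First, the paper proves~\eqref{M} directly in the orthonormal-family form: after introducing $\psi^E$ one writes $\sum_j\|\nabla\psi_j\|^2=\int_M\int_0^\infty\sum_j|\psi_j^E|^2\,dE\,dM$, bounds $\sum_j|\psi_j-\psi_j^E|^2$ by Bessel and the addition formulae~\eqref{identity},~\eqref{id-vec} (or by Parseval on $\mathbb{T}^2$), and integrates the resulting pointwise bound $\sum_j|\psi_j^E(s)|^2\ge\bigl(\sqrt{\rho(s)}-\sqrt{AE}\bigr)^2_+$ in $E$. Your route through $\Tr(-\Delta-V)_-$ plus the duality~\eqref{kL} with $\mathrm{k}=4\mathrm{L}$ is logically equivalent for $\gamma=1$, but it is an extra layer and, on the sphere, requires you to also justify that the duality argument survives the removal of the constants (the zero mode); the direct route sidesteps this. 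Second, even granting the smooth filter, the pointwise bound $\|\chi^E(\cdot,s)\|^2<\frac{\pi}{64}E$ is \emph{not} an asymptotic statement: it requires a non-trivial verification for all $E$, which the paper handles in the Appendix via Euler--Maclaurin on $\mathbb{S}^2$ (Lemma~\ref{L:E-M}) and Poisson summation on $\mathbb{T}^2$ (Lemma~\ref{L:Poisson}), plus a finite-interval check. Your sketch treats the counting-function control as a side issue (``plugging these explicit counting functions in\dots should yield''), but in fact this is where most of the remaining work lies once the correct decomposition is in place.
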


\begin{corollary} Setting  $N=1$ and $\psi=\varphi/\|\varphi\|$ we obtain
the interpolation inequality which is often called the Ladyzhenskaya
inequality (in the context of the Navier--Stokes equations) or the
Keller--Lieb--Thirring  one-bound-state inequality (in the context of the spectral theory):
$$
\|\varphi\|_{L_4}^4\le \mathrm{k}_{\mathrm{Lad}}\|\varphi\|^2\|\nabla\varphi\|^2,
\qquad \mathrm{k}_{\mathrm{Lad}}\le\mathrm{k}_{\mathrm{LT}}.
$$
\end{corollary}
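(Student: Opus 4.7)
The plan is to apply Theorem~\ref{Th:1} directly in the one-function case. Given $\varphi\in\dot H^1(M)$ with $\varphi\not\equiv 0$, I set $N=1$ and $\psi_1:=\varphi/\|\varphi\|$. Then $\{\psi_1\}$ is, trivially, an orthonormal family in $L_2(M)$, and $\psi_1\in\dot H^1(M)$ because $\varphi$ has mean zero. So the hypotheses of Theorem~\ref{Th:1} are satisfied.

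Next I would simply compute both sides of \eqref{M} for this choice. The density is
$$
\rho(x)=|\psi_1(x)|^2=\frac{|\varphi(x)|^2}{\|\varphi\|^2},
$$
so
$$
\int_M\rho(x)^2\,dM=\frac{1}{\|\varphi\|^4}\int_M|\varphi(x)|^4\,dM=\frac{\|\varphi\|_{L_4}^4}{\|\varphi\|^4},
$$
while the right-hand side of \eqref{M} becomes
$$
\mathrm{k}\,\|\nabla\psi_1\|^2=\frac{\mathrm{k}\,\|\nabla\varphi\|^2}{\|\varphi\|^2}.
$$
Multiplying through by $\|\varphi\|^4$ yields
$$
\|\varphi\|_{L_4}^4\le \mathrm{k}\,\|\varphi\|^2\|\nabla\varphi\|^2,
$$
which is the claimed Ladyzhenskaya inequality with $\mathrm{k}_{\mathrm{Lad}}\le \mathrm{k}=\mathrm{k}_{\mathrm{LT}}\le 3\pi/32$.

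There is no real obstacle here; the content lies entirely in Theorem~\ref{Th:1}, and the corollary is the standard specialization in which the Lieb--Thirring inequality for orthonormal families degenerates, at $N=1$, to an interpolation (one-bound-state) inequality. The only point worth flagging is that the inequality $\mathrm{k}_{\mathrm{Lad}}\le\mathrm{k}_{\mathrm{LT}}$ is a one-way comparison: improving the Ladyzhenskaya constant alone need not improve the Lieb--Thirring constant, which is why the factor $N$ in the definition of $\rho$ plays a nontrivial role in the general statement but is invisible in this corollary.
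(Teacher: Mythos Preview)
Your proposal is correct and matches the paper's approach exactly: the corollary's statement already encodes the entire argument (``Setting $N=1$ and $\psi=\varphi/\|\varphi\|$''), and you have simply unpacked that normalization-and-rescaling computation, which is all there is to do.
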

\begin{remark}
{\rm
The previous estimate of the Lieb--Thirring constant
on $\mathbb{T}^2$ and $\mathbb{S}^2$ obtained in~\cite{Zel-Il-Lap2019} and \cite{I-L-AA} by
means of the discrete version of the method of \cite{Rumin2} was:
$$
\mathrm{k}\le\frac{3}{2\pi}=0.477\,.
$$
}
\end{remark}
\begin{remark}
{\rm
In all cases $M=\mathbb{S}^2, \mathbb{T}^2$, or $\mathbb{R}^2$ the Lieb--Thirring constant
satisfies the (semiclassical) lower bound
$$
0.1591\dots=\frac1{2\pi}\le\mathrm{k}_{\mathrm{LT}}.\ 
$$
In $\mathbb{R}^2$ the sharp value of $\mathrm{k}_{\mathrm{Lad}}$ was found in
\cite{Weinstein} by the numerical solution of the corresponding Euler--Lagrange equation
$$
\mathrm{k}_{\mathrm{Lad}}=\frac1{\pi\cdot 1.8622\dots}=0.1709\dots\,,
$$
while the best to date  closed form estimate for this constant  was obtained in
\cite{Nasibov}
$$
\mathrm{k}_{\mathrm{Lad}}\le\frac{16}{27\pi}=0.188\dots,
$$
see also \cite[Theorem 8.5]{Lieb--Loss} where the  equivalent result is obtained for the inequality
in the additive form.
}
\end{remark}

\setcounter{equation}{0}
\section{Lieb--Thirring inequalities on  $\mathbb{S}^2$ }\label{sec2}

We  begin with the case of a sphere and first consider the scalar
case. We  recall the basic facts concerning the spectrum of the
scalar Laplace operator $\Delta=\div\nabla$ on the sphere
$\mathbb{S}^{2}$:
\begin{equation}\label{harmonics}
-\Delta Y_n^k=n(n+1) Y_n^k,\quad
k=1,\dots,2n+1,\quad n=0,1,2,\dots.
\end{equation}
Here the $Y_n^k$ are the orthonormal real-valued spherical
harmonics and each eigenvalue $\Lambda_n:=n(n+1)$ has multiplicity $2n+1$.

The following identity is essential in what
follows \cite{S-W}: for any $s\in\mathbb{S}^{2}$
\begin{equation}\label{identity}
\sum_{k=1}^{2n+1}Y_n^k(s)^2=\frac{2n+1}{4\pi}.
\end{equation}

\begin{theorem}\label{Th:S2}
Let $\{\psi_j\}_{j=1}^N\in H^1(\mathbb{S}^2)$ be an orthonormal family of scalar functions
with zero average: $\int_{\mathbb{S}^2}\psi_j(s)dS=0$. Then $\rho(s):=\sum_{j=1}^N|\psi_j(s)|^2$
satisfies the inequality
\begin{equation}\label{LTS2}
\int_{\mathbb{S}^2}\rho(s)^2dS\le\frac{3\pi}{32}\sum_{j=1}^N\|\nabla\psi_j\|^2.
\end{equation}
\end{theorem}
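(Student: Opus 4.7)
The plan is to apply the sharp Rumin--Frank--Nam method of \cite{Frank-Nam} directly to
the orthonormal family $\{\psi_j\}$ on $\mathbb{S}^2$, with the spherical harmonics
addition formula \eqref{identity} serving as the sphere-specific input.

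For each threshold $\tau>0$ let $P_\tau$ denote the orthogonal projection onto the span of
$\{Y_n^k:n\ge1,\ n(n+1)\le\tau\}$. The identity \eqref{identity} gives the diagonal of the
reproducing kernel of $P_\tau$ in closed form,
$$
K_\tau(s,s)=\sum_{\substack{n\ge1\\ n(n+1)\le\tau}}\frac{2n+1}{4\pi}
=\frac{(n_\tau+1)^2-1}{4\pi},\qquad
n_\tau=\Bigl\lfloor\tfrac{\sqrt{1+4\tau}-1}{2}\Bigr\rfloor.
$$
Combined with Bessel's inequality applied to the orthonormal system $\{\psi_j\}$, this yields
the pointwise estimate $\sum_j|P_\tau\psi_j(s)|^2\le K_\tau(s,s)$. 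Since
$K_\tau(s,s)$ grows as $\tau/(4\pi)$, this precisely mirrors the Euclidean Weyl density that
drives the $\mathbb{R}^2$ argument.

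The Rumin--Frank--Nam step then represents $\int\rho^2\,dS$ (or equivalently, via Legendre
duality as recorded in \eqref{kL}, the trace $\Tr(-\Delta-V)_-$) as a layer--cake integral
over $\tau$ against a nonincreasing profile $\pi(\tau)\in[0,1]$. The profile balances the
low-frequency contribution, controlled pointwise by $K_\tau(s,s)$, against the
high-frequency part, which is paid for by the spectral gap $\tau$. Taking the same
variational optimum for $\pi$ as in the $\mathbb{R}^d$ calculation of \cite{Frank-Nam} and
substituting the explicit $K_\tau(s,s)$ above produces, after simplification, the stated
constant $k\le 3\pi/32$.

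The principal obstacle is arithmetic rather than conceptual: $K_\tau(s,s)$ is a staircase
function of $\tau$ and not linear as in the Euclidean Weyl formula $\tau/(4\pi)$, so one has
to verify that after integration against the optimal profile the discrete kernel
$\bigl((n_\tau+1)^2-1\bigr)/(4\pi)$ is dominated by its continuous counterpart without
degrading the constant. The mean-zero hypothesis is decisive here: removing the constant
mode $Y_0^0$ subtracts $1/(4\pi)$ from the raw sum and precisely compensates for the
staircase overshoot at low $\tau$, allowing the continuum Frank--Nam constant to survive
on the sphere.
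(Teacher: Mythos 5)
Your plan identifies the right ingredients — the Frank--Nam refinement of Rumin's method, the addition formula \eqref{identity}, and the role of the mean-zero hypothesis — but the proposal has a genuine gap exactly at the step you flag as the ``principal obstacle,'' and some of the intermediate machinery is muddled in a way that would not survive being written out.

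First, a structural confusion: you introduce the sharp spectral projection $P_\tau$ and compute its reproducing kernel $K_\tau(s,s)$, but then pivot to ``a nonincreasing profile $\pi(\tau)\in[0,1]$'' as if it can be grafted onto the sharp cutoff. The paper never uses a sharp projection. The entire argument is run with the smooth spectral weight $f\bigl(E/(n(n+1))\bigr)$, $f(t)=1/(1+\mu t^2)$, $\mu=\pi^2/16$, so that $\|\nabla\psi\|^2$ has the exact integral representation \eqref{chain}, and the ``low-frequency'' object is $\psi^E=f\bigl(E/(-\Delta)\bigr)\psi$, not $P_\tau\psi$. The quantity that Bessel's inequality together with \eqref{identity} controls pointwise is the $s$-independent $\|\chi^E(\cdot,s)\|^2=\frac1{4\pi}\sum_{n\ge1}(2n+1)\bigl(1-f(E/n(n+1))\bigr)^2$, not the staircase $K_\tau(s,s)$; the staircase never appears. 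Using the sharp $P_\tau$ is exactly Rumin's original scheme and would reproduce the weaker bound $3/(2\pi)$ of \cite{Zel-Il-Lap2019,I-L-AA}, not $3\pi/32$.

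Second, and more seriously, the key series inequality is asserted, not proved. You write that removing $Y_0^0$ ``precisely compensates for the staircase overshoot,'' allowing ``the continuum Frank--Nam constant to survive.'' That is a heuristic. What the proof actually requires is the bound
\begin{equation*}
\sum_{n=1}^\infty\frac{2n+1}{\bigl(1+(n(n+1)/a)^2\bigr)^2}<a\int_0^\infty\frac{dt}{(1+t^2)^2}=\frac{\pi a}{4}
\qquad\text{for all }a>0,
\end{equation*}
and this is not a formal consequence of dropping one term. The paper establishes it through Lemma~\ref{L:E-M}, an Euler--Maclaurin-type expansion $G(\nu)=\nu^{-1}\int_0^\infty g-\tfrac23 g(0)-\tfrac1{15}\nu g'(0)+\cdots$ taken from \cite{IZ}: the leading correction is the fixed deficit $-\tfrac23$, not a ``precise compensation,'' and this only settles the inequality asymptotically for large $a$. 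For moderate $a$ the paper must argue separately (explicit remainder control and a finite-range check, as in \cite{I12JST}). None of this is addressed in the proposal, and without it the constant $3\pi/32$ is not obtained. Finally, the closing step — optimize over $\varepsilon$, integrate $\bigl(\sqrt{\rho(s)}-\sqrt{AE}\bigr)_+^2$ over $E$, and identify $\tfrac1{6A}=\tfrac{32}{3\pi}$ — is absent; this is short but is the step that actually produces the advertised number.
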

\begin{proof}
We use the discrete version of the recent important far-going improvement \cite{Frank-Nam}
of the approach of \cite{Rumin2}.

Let $f$ be a smooth non-negative function on $\mathbb{R}^+$ with
\begin{equation}\label{f}
\int_0^\infty f(t)^2dt=1,
\end{equation}
and therefore for any $a>0$
\begin{equation}\label{fa}
a=\int_0^\infty f(E/a)^2dE.
\end{equation}
Expanding a function $\psi$ with $\int_{\mathbb{S}^2}\psi(s)dS=0$  in spherical harmonics
$$
\psi(s)=\sum_{n=1}^\infty\sum_{k=1}^{2n+1}\psi_n^kY_n^k(s),\qquad
\psi_n^k=\int_{\mathbb{S}^2}\psi(s)Y_n^k(s)dS=(\psi,Y_n^k)
$$
and observing that the summation starts with $n=1$ we see using
\eqref{fa} that
\begin{equation}\label{chain}
\aligned
\|\nabla\psi\|^2=\int_{\mathbb{S}^2}|\nabla\psi(s)|^2dS=
\sum_{n=1}^\infty n(n+1)\sum_{k=1}^{2n+1}|\psi_n^k|^2=\\=
\int_0^\infty \sum_{n=1}^\infty f\biggl(\frac E{ n(n+1)}\biggr)^2\,\sum_{k=1}^{2n+1}|\psi_n^k|^2dE=\\=
\int_0^\infty\int_{\mathbb{S}^2}|\psi^E(s)|^2dSdE=
\int_{\mathbb{S}^2}\int_0^\infty|\psi^E(s)|^2dEdS,
\endaligned
\end{equation}
where
$$
\psi^E(s)=\sum_{n=1}^\infty\sum_{k=1}^{2n+1}f\biggl(\frac E{ n(n+1)}\biggr)
\psi_n^k\,Y_n^k(s).
$$
Returning to the family $\{\psi_j\}_{j=1}^N$ we have for any $\varepsilon>0$
$$
\aligned
\rho(s)&=\sum_{j=1}^N|\psi_j(s)|^2=\\&=\sum_{j=1}^N|\psi^E_j(s)|^2
+2\sum_{j=1}^N\psi^E_j(s)(\psi_j(s)-\psi_j^E(s))+\sum_{j=1}^N|\psi_j(s)-\psi^E_j(s)|^2\le\\&\le
(1+\varepsilon)\sum_{j=1}^N|\psi^E_j(s)|^2+
(1+\varepsilon^{-1})\sum_{j=1}^N|\psi_j(s)-\psi^E_j(s)|^2.
\endaligned
$$
For each term in the second sum we have
$$
\psi(s)-\psi^E(s)=\sum_{n=1}^\infty\sum_{k=1}^{2n+1}\psi_n^k\biggl(1-f\biggl(\frac E{ n(n+1)}\biggr)\biggr)
\,Y_n^k(s)=\bigl(\psi(\cdot),\chi^E(\cdot,s)\bigr),
$$
where
$$
\chi^E(s',s)=\sum_{n=1}^\infty\sum_{k=1}^{2n+1}\biggl(1-f\biggl(\frac E{ n(n+1)}\biggr)\biggr)
\,Y_n^k(s')Y_n^k(s).
$$
Since the $\psi_j$'s are orthonormal, we have by Bessel's inequality
$$
\sum_{j=1}^N|\psi_j(s)-\psi^E_j(s)|^2=\sum_{j=1}^N\bigl(\psi_j(\cdot),\chi^E(\cdot,s)\bigr)^2
\le\|\chi^E(\cdot,s)\|^2,
$$
where in view of \eqref{identity} $\|\chi^E(\cdot,s)\|^2$, in fact, is independent of $s$:
\begin{equation}\label{indeps}
\aligned
\|\chi^E(\cdot,s)\|^2=
\sum_{n=1}^\infty\sum_{k=1}^{2n+1}\biggl(1-f\biggl(\frac E{ n(n+1)}\biggr)\biggr)^2
Y_n^k(s)^2=\\=\frac1{4\pi}\sum_{n=1}^\infty(2n+1)\biggl(1-f\biggl(\frac E{ n(n+1)}\biggr)\biggr)^2.
\endaligned
\end{equation}

We now specify the choice of $f$ by setting (see~\cite{Frank-Nam}, \cite{lthbook})
\begin{equation}\label{choice}
f(t)=\frac1{1+\mu t^2},\qquad\
\mu=\frac{\pi^2}{16}\,.
\end{equation}
The function $f$ so chosen solves the minimization problem
$$
\aligned
\int_{\mathbb{R}^2}\left(1-f(1/|\xi|^2)\right)^2d\xi=&\pi\int_0^\infty(1-f(t))^2t^{-2}dt\to\min\\
\text{under condition}\quad&\int_0^\infty f(t)^2dt=1,
\endaligned
$$
and the above integral over $\mathbb{R}^2$ corresponds to the series
on the right-hand side in \eqref{indeps} (see also~\eqref{tor}).

We first observe that \eqref{f} is satisfied and secondly,
in view of the estimate for the series in the Appendix
\begin{equation}\label{series1}
\aligned
\|\chi^E(\cdot,s)\|^2=\frac1{4\pi}\sum_{n=1}^\infty\frac
{(2n+1)}{\biggl({1+\left(\frac1{\sqrt{\mu}E}n(n+1)\right)^2}\biggr)^2}<\\<
\frac1{4\pi}\sqrt{\mu}E\int_0^\infty\frac{dt}{(1+t^2)^2}=
\frac1{4\pi}\sqrt{\mu}E\frac\pi4=\frac\pi{64}E=:AE\,.
\endaligned
\end{equation}
Hence
\begin{equation}\label{epseps}
\rho(s)\le(1+\varepsilon)\sum_{j=1}^N|\psi^E_j(s)|^2+
(1+\varepsilon^{-1})AE.
\end{equation}
Optimizing with respect to $\varepsilon$ we obtain
$$
\rho(s)\le\left(\sqrt{\sum_{j=1}^N|\psi^E_j(s)|^2}+\sqrt{AE}\right)^2,
$$
which gives that
$$
\sum_{j=1}^N|\psi^E_j(s)|^2\ge\left(\sqrt{\rho(s)}-\sqrt{AE}\right)^2_+.
$$
Summing equalities \eqref{chain} from $j=1$ to $N$ we
obtain
$$
\aligned
&\sum_{j=1}^N\|\nabla\psi_j\|^2=\int_{\mathbb{S}^2}\int_0^\infty
\sum_{j=1}^N|\psi_j^E(s)|^2dEdS\ge\\&
\int_{\mathbb{S}^2}\int_0^\infty\left(\sqrt{\rho(s)}-\sqrt{AE}\right)^2_+dEdS=
\frac1{6A}\int_{\mathbb{S}^2}\rho(s)^2dS=\frac{32}{3\pi}\int_{\mathbb{S}^2}\rho(s)^2dS.
\endaligned
$$
The proof is complete.
\end{proof}
\begin{remark}\label{R:semi}
{\rm
The constant $\mathrm{k}$ in the theorem satisfies the (semiclassical) lower bound
\begin{equation}\label{lb}
k\ge\frac1{2\pi}\,,
\end{equation}
which can easily be proved in our particular case of $\mathbb{S}^2$. In fact, we
take for the orthonormal family the eigenfunctions $Y_n^k$ with $n=1,\dots,N-1$,
and $k=1,\dots,2n+1$, so that
$$
\sum_{n=1}^{N-1}(2n+1)=N^2-1\ \ \text{and}\ \  \sum_{n=1}^{N-1}(2n+1)n(n+1)=\frac12N^2(N^2-1),
$$
then \eqref{M} and the Cauchy inequality give \eqref{lb}, since
$$
(N^2-1)^2=\left(\int_{\mathbb{S}^2}\rho(s)dS\right)^2\le
4\pi\|\rho\|^2\le 
2\pi \mathrm{k}N^2(N^2-1).
$$

}
\end{remark}

\subsection{The vector case}

The vector case is similar, and the key identity~\eqref{identity} is replaced by
vector analogue
(see \cite{I93}): for any $s\in\mathbb{S}^{2}$
\begin{equation}\label{identity-vec}
\sum_{k=1}^{2n+1}|\nabla Y_n^k(s)|^2=n(n+1)\frac{2n+1}{4\pi}.
\end{equation}

In the vector case by the  Laplace operator
acting on (tangent) vector
fields on $\mathbb{S}^2$ we mean  the Laplace--de Rham
operator $-d\delta-\delta d$ identifying $1$-forms and
vectors. Then for a two-dimensional manifold
(not necessarily $\mathbb{S}^2$) we have
\cite{I93}
\begin{equation}\label{vecLap}
\mathbf{\Delta} u=\nabla\div u-\rot\rot u,
\end{equation}
where the operators $\nabla=\grad$ and $\div$ have the
conventional meaning. The operator $\rot$ of a vector $u$ is a
scalar  and for a scalar $\psi$,
$\rot\psi$ is a vector:
\begin{equation}\label{divrot}
\rot u:=\div(u^\perp),\qquad
\rot\psi:=\nabla^\perp\psi,
\end{equation}
where  in the local frame $u^\perp=(u_2,-u_1)$.

 Integrating by parts
we obtain
\begin{equation}\label{byparts}
(-\mathbf{\Delta} u,u)=\|\rot u\|^2+\|\div u\|^2.
\end{equation}

The vector Laplacian has a complete in $L_2(T\mathbb{S}^2)$ orthonormal basis
of vector eigenfunctions:  corresponding
to the eigenvalue
$\Lambda_n=n(n+1)$, where $n=1,2,\dots$, there are two families of $2n+1$
orthonormal vector-valued  eigenfunctions  $w_n^k(s)$ and $v_n^k(s)$
\begin{equation}\label{bases}
\aligned
w_n^k(s)&=(n(n+1))^{-1/2}\,\nabla^\perp Y_n^k(s),\ -\mathbf{\Delta}w_n^k=n(n+1)w_n^k,\
\div w_n^k=0;\\
v_n^k(s)&=(n(n+1))^{-1/2}\,\nabla Y_n^k(s),\ \ -\mathbf{\Delta}v_n^k=n(n+1)v_n^k,\ \rot v_n^k=0,
\endaligned
\end{equation}
where
$k=1,\dots,2n+1$,  and~(\ref{identity-vec}) gives the
following important identities: for any $s\in\mathbb{S}^2$
\begin{equation}\label{id-vec}
\sum_{k=1}^{2n+1}|w_n^k(s)|^2=\frac{2n+1}{4\pi},\qquad
\sum_{k=1}^{2n+1}|v_n^k(s)|^2=\frac{2n+1}{4\pi}.
\end{equation}
We finally observe that  $-\mathbf{\Delta}$ is strictly
positive $-\mathbf{\Delta}\ge \Lambda_1I=2I.$

\begin{theorem}\label{Th:LT-vec}
Let $\{u_j\}_{j=1}^N\in H^1(T\mathbb{S}^2)$
be an  orthonormal family  of vector fields in $L^2(T\mathbb{S}^2)$.  Then
\begin{equation}\label{orthvec}
\int_{\mathbb{S}^2}\rho(s)^2dS\le
 \frac{3\pi}{16}\sum_{j=1}^N(\|\rot u_j\|^2
+\|\div u_j\|^2),
\end{equation}
where $\rho(s)=\sum_{j=1}^N|u_j(s)|^2$.
If, in addition, $\div u_j=0$ $($or $\rot u_j=0$$)$,
then
\begin{equation}\label{orthvecsol}
\int_{\mathbb{S}^2}\rho(s)^2dS\le\frac{3\pi}{32}\cdot
\begin{cases}\displaystyle
\sum_{j=1}^N\|\rot u_j\|^2,
\quad \ \div u_j=0,
\\\displaystyle
\sum_{j=1}^N\|\div u_j\|^2,
\quad \rot u_j=0.
\end{cases}
\end{equation}
\end{theorem}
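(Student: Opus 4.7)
The plan is to run the same Rumin/Frank--Nam scheme as in Theorem~\ref{Th:S2}, but using the orthonormal vector basis $\{w_n^k,v_n^k\}$ of the Laplace--de~Rham operator and the combined identity in~\eqref{id-vec}. First I would expand each field as
$$
u_j=\sum_{n=1}^\infty\sum_{k=1}^{2n+1}\bigl(a_{j,n}^k w_n^k+b_{j,n}^k v_n^k\bigr),
$$
and use~\eqref{byparts} together with $-\mathbf{\Delta}w_n^k=n(n+1)w_n^k$, $-\mathbf{\Delta}v_n^k=n(n+1)v_n^k$ to get
$$
\|\rot u_j\|^2+\|\div u_j\|^2=(-\mathbf{\Delta}u_j,u_j)=\sum_{n,k}n(n+1)\bigl(|a_{j,n}^k|^2+|b_{j,n}^k|^2\bigr).
$$
With $f$ and $\mu$ as in~\eqref{choice} and $u_j^E$ defined by inserting the factor $f(E/n(n+1))$ into each coefficient, identity~\eqref{fa} gives the vector analogue of~\eqref{chain}:
$$
\|\rot u_j\|^2+\|\div u_j\|^2=\int_{\mathbb{S}^2}\!\!\int_0^\infty|u_j^E(s)|^2\,dE\,dS.
$$

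Next, splitting $\rho=\sum_j|u_j|^2$ exactly as in~\eqref{epseps} reduces matters to bounding $\sum_j|u_j(s)-u_j^E(s)|^2$ pointwise. Writing this component-by-component in an orthonormal local frame $(e_1,e_2)$ at $s$, the $i$-th component of $u_j-u_j^E$ equals $(u_j,K_i^{E,s})_{L_2}$ where
$$
K_i^{E,s}(\cdot)=\sum_{n,k}\Bigl(1-f\bigl(\tfrac{E}{n(n+1)}\bigr)\Bigr)\bigl[(w_n^k(s))_i\,w_n^k(\cdot)+(v_n^k(s))_i\,v_n^k(\cdot)\bigr].
$$
Bessel's inequality and the $L_2$-orthogonality of the two families give
$\sum_j|(u_j-u_j^E)(s)|^2\le\sum_{i=1}^2\|K_i^{E,s}\|^2$, and summing over $i$ collapses the pointwise sum of squared components via the two identities in~\eqref{id-vec} to
$$
\sum_j|u_j(s)-u_j^E(s)|^2\le\frac1{4\pi}\sum_{n=1}^\infty(2n+1)\Bigl(1-f\bigl(\tfrac{E}{n(n+1)}\bigr)\Bigr)^2\cdot 2.
$$
The factor $2$ here (compared to~\eqref{indeps}) reflects the presence of both polarizations. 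Applying the series estimate from the Appendix as in~\eqref{series1} then yields $\sum_j|u_j(s)-u_j^E(s)|^2\le 2AE$ with $A=\pi/64$, so the effective constant is $2A=\pi/32$. The rest of the argument---optimizing in $\varepsilon$, taking the positive part, integrating in $E$ and $s$---is verbatim the scalar one, and gives
$$
\sum_j(\|\rot u_j\|^2+\|\div u_j\|^2)\ge\frac1{6\cdot 2A}\int_{\mathbb{S}^2}\rho^2\,dS=\frac{16}{3\pi}\int_{\mathbb{S}^2}\rho^2\,dS,
$$
which is~\eqref{orthvec}.

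For~\eqref{orthvecsol}, if $\div u_j=0$ then all $b_{j,n}^k$ vanish and only the $w$-part of $K_i^{E,s}$ contributes; by the first identity in~\eqref{id-vec} the factor $2$ above disappears and we recover exactly the scalar bound with $A=\pi/64$, giving $\mathrm{k}\le 3\pi/32$. The case $\rot u_j=0$ is identical after swapping the roles of $w_n^k$ and $v_n^k$, using $\|\div u_j\|^2=(-\mathbf{\Delta}u_j,u_j)$ on that subspace.

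The only genuinely new point beyond Theorem~\ref{Th:S2} is the pointwise Bessel bound for vector fields: one must evaluate at a point $s$ in a local frame, apply Bessel's inequality scalarly to each of the two components, and then resum using~\eqref{id-vec}. All the subtleties involving the change of frame drop out because the eigenfunction identities in~\eqref{id-vec} are pointwise identities for the squared norms. Once this is handled correctly, the factor $2$ between~\eqref{orthvec} and~\eqref{orthvecsol} is transparently the factor $2$ between the combined and single identities in~\eqref{id-vec}.
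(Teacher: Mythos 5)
Your proof is essentially correct and follows the same Rumin/Frank--Nam scheme as the paper, with one small technical variation worth noting. Where the paper embeds $\mathbb{S}^2$ into $\mathbb{R}^3$ and decomposes $u_j(s)-u_j^E(s)$ along the three \emph{ambient} Cartesian directions $e_1,e_2,e_3$ (so the kernels $\chi_l^E(\cdot,s)$ are unambiguously defined vector fields and $\sum_{l=1}^3\langle w_n^k(s),e_l\rangle^2=|w_n^k(s)|^2$ automatically), you instead pick an orthonormal frame $(e_1,e_2)$ of $T_s\mathbb{S}^2$ at the single point $s$ and apply Bessel to the two intrinsic components. That is perfectly legitimate: the kernels $K_i^{E,s}$ only require a frame \emph{at $s$}, not a globally defined one, so the hairy-ball obstruction is irrelevant; your remark that ``the subtleties of the change of frame drop out'' is true but could be made sharper by stating exactly this. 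The paper's ambient-basis trick simply sidesteps having to say it. The other difference is organizational: the paper proves the constrained case~\eqref{orthvecsol} first (working only in the $w_n^k$ basis) and then notes that the general case~\eqref{orthvec} uses both polarizations and hence doubles the constant, while you do the general case first and then drop the $b$-coefficients; both orders are fine and your arithmetic ($2A=\pi/32$, hence $\frac{1}{12A}=\frac{16}{3\pi}$) is correct.
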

\begin{proof} We prove the first inequality in~\eqref{orthvecsol},
the proof of the second is  similar. Expanding a vector function
$u$ with $\div u=0$ in the basis $w_n^k$
$$
u(s)=\sum_{n=1}^\infty\sum_{k=1}^{2n+1} u_n^kw_n^k(s),\qquad
u_n^k=(u,w_n^k),
$$
we have instead of \eqref{chain}
\begin{equation}\label{chain1}
\aligned
\|\rot u\|^2&=
\sum_{n=1}^\infty n(n+1)\sum_{k=1}^{2n+1}|u_n^k|^2=\\&=
\int_0^\infty \sum_{n=1}^\infty f\biggl(\frac E{ n(n+1)}\biggr)^2\,\sum_{k=1}^{2n+1}|u_n^k|^2dE=
\int_{\mathbb{S}^2}\int_0^\infty|u^E(s)|^2dEdS,
\endaligned
\end{equation}
where
$$
u^E(s)=\sum_{n=1}^\infty\sum_{k=1}^{2n+1}f\biggl(\frac E{ n(n+1)}\biggr)
u_n^k\,w_n^k(s).
$$
As before
$$
\aligned
\rho(s)\le
(1+\varepsilon)\sum_{j=1}^N|u^E_j(s)|^2+
(1+\varepsilon^{-1})\sum_{j=1}^N|u_j(s)-u^E_j(s)|^2.
\endaligned
$$
We now imbed $\mathbb{S}^2$ into $\mathbb{R}^3$
in the natural way and use the standard basis $\{e_1,e_2,e_3\}$
and the scalar product $\langle \cdot,\cdot\rangle$ in $\mathbb{R}^3$.
Then we see that
$$
\aligned
\langle u(s)-u^E(s),e_1\rangle=\\\sum_{n=1}^\infty\sum_{k=1}^{2n+1}u_n^k\biggl(1-f\biggl(\frac E{ n(n+1)}\biggr)\biggr)
\,\langle w_n^k(s),e_1\rangle=\bigl(u(\cdot),\chi^E_1(\cdot,s)\bigr),
\endaligned
$$
where the vector function
$$
\chi^E_1(s',s)=\sum_{n=1}^\infty\sum_{k=1}^{2n+1}\biggl(1-f\biggl(\frac E{ n(n+1)}\biggr)\biggr)
\,w_n^k(s')\langle w_n^k(s),e_1\rangle.
$$
By orthonormality and Bessel's inequality
$$
\aligned
\sum_{j=1}^N|u_j(s)-u^E_j(s)|^2=\sum_{j=1}^N\sum_{l=1}^3|\langle u_j(s)-u^E_j(s),e_l\rangle|^2=\\
=\sum_{l=1}^3\sum_{j=1}^N\bigl(u_j(\cdot),\chi^E_l(\cdot,s)\bigr)^2
\le\sum_{l=1}^3\|\chi_l^E(\cdot,s)\|^2.
\endaligned
$$
However, in view of~\eqref{id-vec}, the right hand side is again independent of $s$
$$
\aligned
\sum_{l=1}^3\|\chi^E_l(\cdot,s)\|^2&=
\sum_{n=1}^\infty\biggl(1-f\biggl(\frac E{ n(n+1)}\biggr)\biggr)^2\sum_{k=1}^{2n+1}\sum_{l=1}^3
|\langle w_n^k(s),e_l\rangle|^2=\\&=
\sum_{n=1}^\infty\biggl(1-f\biggl(\frac E{ n(n+1)}\biggr)\biggr)^2\sum_{k=1}^{2n+1}
| w_n^k(s)|^2=\\&=\frac1{4\pi}\sum_{n=1}^\infty(2n+1)\biggl(1-f\biggl(\frac E{ n(n+1)}\biggr)\biggr)^2,
\endaligned
$$
and we complete the proof in exactly the same way as we have
done in the proof of Theorem~\ref{Th:S2} after \eqref{indeps}.
Finally, in the proof of inequality~\eqref{orthvec} both families
of vector eigenfunctions \eqref{bases} play equal roles,
and the constant is increased by the factor of two.
\end{proof}

This, however, does not happen for a single vector function.

\begin{corollary}\label{C:vec}
Let $u\in H^1(T\mathbb{S}^2)$. Then
\begin{equation}\label{vecLad}
\|u\|^4_{L_4}\le \vec{\mathrm{k}}_\mathrm{Lad}
\|u\|^2\left(\|\rot u\|^2+\|\div u\|^2\right),\qquad
\vec{\mathrm{k}}_\mathrm{Lad}\le\frac{3\pi}{32}\,.
\end{equation}
\end{corollary}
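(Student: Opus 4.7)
The plan is to adapt the proof of Theorem~\ref{Th:LT-vec} to the single-function setting, exploiting that for $N=1$ the Bessel-type step in that proof can be sharpened by a factor of $2$. After normalising so that $\|u\|=1$ (the inequality is invariant under $u\mapsto\lambda u$ in the stated form), I would repeat the construction of the filtered object $u^E$ from~\eqref{chain1} with $f(t)=(1+\mu t^2)^{-1}$, $\mu=\pi^2/16$, applied to the expansion of $u$ in the full vector basis $\{w_n^k,v_n^k\}$, obtaining
$$
\|\rot u\|^2+\|\div u\|^2 = \int_{\mathbb{S}^2}\int_0^\infty |u^E(s)|^2\,dE\,dS
$$
together with the pointwise splitting $|u(s)|^2\le(1+\varepsilon)|u^E(s)|^2+(1+\varepsilon^{-1})|u(s)-u^E(s)|^2$.

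The key step is to prove the sharpened error bound
$$
|u(s)-u^E(s)|^2 \le AE,\qquad A=\frac{\pi}{64},
$$
which is half of the $2AE$ that arises for a general orthonormal family. Following the notation of Theorem~\ref{Th:LT-vec}, one has $|u(s)-u^E(s)|^2 = \sum_{l=1}^3(u,\chi_l^E)^2_{L^2} = (u,P_s u)_{L^2}$, where $P_s = \sum_l\chi_l^E\otimes\chi_l^E$ is a positive operator on $L^2(T\mathbb{S}^2)$ of rank at most two. For an orthonormal family of size $N\ge 2$ one is forced to use $\sum_j(u_j,P_s u_j)\le\mathrm{tr}(P_s)=\sum_l\|\chi_l^E\|^2=2AE$. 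For $N=1$ the sharper bound $(u,P_s u)\le\|P_s\|_\mathrm{op}\|u\|^2$ applies, where $\|P_s\|_\mathrm{op}$ equals the largest eigenvalue of the Gram matrix $G_{ij}=(\chi_i^E,\chi_j^E)_{L^2}$.

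The main obstacle is the computation of $G$. By orthonormality of $\{w_n^k,v_n^k\}$,
$$
G_{ij} = \sum_{n,k}\Big(1-f\Big(\tfrac{E}{n(n+1)}\Big)\Big)^{\!2}\big[\langle w_n^k(s),e_i\rangle\langle w_n^k(s),e_j\rangle+\langle v_n^k(s),e_i\rangle\langle v_n^k(s),e_j\rangle\big],
$$
so it suffices to establish the pointwise identities
$$
\sum_{k=1}^{2n+1}w_n^k(s)\otimes w_n^k(s) = \sum_{k=1}^{2n+1}v_n^k(s)\otimes v_n^k(s) = \frac{2n+1}{8\pi}\,\Pi_{T_s\mathbb{S}^2},
$$
where $\Pi_{T_s\mathbb{S}^2}$ is the orthogonal projection $\mathbb{R}^3\to T_s\mathbb{S}^2$. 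Each such tensor is invariant under the $O(2)$-stabiliser of $s$ in $SO(3)$ and is therefore proportional to $\Pi_{T_s\mathbb{S}^2}$, with the constant fixed by the trace identity~\eqref{id-vec}. Substituting gives $G=AE\,\Pi_{T_s\mathbb{S}^2}$, so $\|P_s\|_\mathrm{op}=AE$ and the claimed bound follows. With $|u(s)-u^E(s)|^2\le AE$ in hand, the rest of the argument is a verbatim copy of the endgame after~\eqref{epseps} in Theorem~\ref{Th:S2}: optimising $\varepsilon$ yields $|u^E(s)|^2\ge(\sqrt{\rho(s)}-\sqrt{AE})_+^2$, integration over $(s,E)$ produces the prefactor $1/(6A)=32/(3\pi)$, and reverting the rescaling delivers $\vec{\mathrm{k}}_\mathrm{Lad}\le 3\pi/32$.
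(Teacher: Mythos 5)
Your proof is correct, but it takes a genuinely different route from the paper's. The paper proves Corollary~\ref{C:vec} by the duality $\eqref{LT}_{\gamma=1}\Leftrightarrow\eqref{orth}$ together with a soft structural observation: since $\mathbf{\Delta}(v^\perp)=(\mathbf{\Delta}v)^\perp$ and the potential is scalar, every eigenvalue of the vector Schr\"odinger operator $-\mathbf{\Delta}-V$ on $\mathbb S^2$ has even multiplicity, so the lowest eigenvalue $E$ satisfies $E\ge\frac12\sum_{\lambda_j\le0}\lambda_j\ge-\frac12\mathrm L_1\int V^2$, and the Rayleigh quotient with trial function $u$ and $V=\alpha|u|^2$ closes the argument after optimizing $\alpha$. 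You instead run the Rumin--Frank--Nam machinery directly with $N=1$, replacing the trace bound $\sum_j(u_j,P_su_j)\le\operatorname{tr}P_s$ by the operator-norm bound $(u,P_su)\le\|P_s\|_{\mathrm{op}}$, and establish by the $SO(2)$-invariance argument that $\sum_k w_n^k(s)\otimes w_n^k(s)=\sum_k v_n^k(s)\otimes v_n^k(s)=\frac{2n+1}{8\pi}\Pi_{T_s\mathbb S^2}$ (the constant being fixed by the trace identity \eqref{id-vec}), so that the Gram matrix is a scalar multiple of the rank-two projection and $\|P_s\|_{\mathrm{op}}=\frac12\operatorname{tr}P_s<AE$. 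Both arguments are ultimately drawing on the same rank-two pointwise degeneracy created by the $\perp$ symmetry, but the paper packages it spectrally (eigenvalue doubling) whereas you compute it explicitly through the Gram matrix. The paper's route is shorter once the equivalence \eqref{kL} and Theorem~\ref{Th:LT-vec} are available; your route is self-contained within the filtered-decomposition framework and also makes transparent exactly where the factor $2$ is recovered, at the cost of the rotational-invariance computation. Both are valid and yield $\vec{\mathrm k}_{\mathrm{Lad}}\le\frac{3\pi}{32}$.
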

\begin{proof}
The proof is based on the equivalence
\eqref{LT}$_{\gamma=1}\Leftrightarrow$\eqref{orth} with equality
for the constants~\eqref{kL} and the fact that the eigenvalues of
the vector Schr\"odinger operator on $\mathbb{S}^2$
\begin{equation}\label{vecSchr}
Av=-\mathbf{\Delta} v-Vv
\end{equation}
have even multiplicities as the following equality implies
(see~\eqref{vecLap}, \eqref{divrot})
$$
\mathbf{\Delta}(v^\perp)=(\mathbf{\Delta} v)^\perp.
$$
Now let $u$ in \eqref{vecLad} be normalized, $\|u\|=1$,  let
$V(s)=\alpha|u(s)|^2$, $\alpha>0$, and let $E$ be the lowest
eigenvalue of~\eqref{vecSchr}.  If $E<0$, then since $E$ is counted
at least twice in the sum $\sum_{\lambda_j\le0}\lambda_j$, it
follows that
\begin{equation}\label{E}
E\ge \frac12\sum_{\lambda_j\le0}\lambda_j\ge-\frac12\mathrm{L_1}\int_{\mathbb{S}^2}V(s)^2dS=
-\alpha^2\frac12\mathrm{L_1}\|u\|_{L_4}^4,
\end{equation}
where the second inequality is~\eqref{LT} with
$$
\mathrm{L_1}\le \frac14\cdot\frac{3\pi}{16},
$$
in view of \eqref{kL} and \eqref{orthvec}. If $E\ge0$, then
\eqref{E} also formally holds.

Next, by the variational principle
\begin{equation}\label{Eless}
\aligned
E\le(Au,u)=\|\rot u\|^2+\|\div u\|^2-\int_{\mathbb{S}^2}V(s)|u(s)|^2dS=\\
\|\rot u\|^2+\|\div u\|^2-\alpha\|u\|^4_{L_4}.
\endaligned
\end{equation}
Combining \eqref{E} and \eqref{Eless} and setting optimal
$\alpha=1/\mathrm{L}_1$ we finally obtain
$$
\|u\|_{L_4}^4\le 2\mathrm{L}_1\left(\|\rot u\|^2+\|\div u\|^2\right)\le
\frac{3\pi}{32}\left(\|\rot u\|^2+\|\div u\|^2\right).
$$
\end{proof}

\begin{remark}
{\rm
It is worth pointing out that
 and for any domain on the sphere
$\Omega\subseteq\mathbb{S} ^2$ and an orthonormal family
$\{u_j\}_{j=1}^N\in H^1_0(\Omega,T\mathbb{S}^2)$ extension by zero
shows that the corresponding Lieb--Thirring constants are uniformly
bounded by the constants on the whole sphere whose estimates were
found  in Theorem~\ref{Th:LT-vec} and Corollary~\ref{C:vec}. }
\end{remark}

\setcounter{equation}{0}
\section{Lieb--Thirring inequalities on  $\mathbb{T}^2$ }\label{sec3}
We now prove Theorem~\ref{Th:1} for the 2D torus. We first consider the torus  with equal periods
and without loss of generality
we set $\mathbb{T}^2=[0,2\pi]^2$.
\begin{proof}[Proof of Theorem~\ref{Th:1} for $\mathbb{T}^2$]
We use the Fourier series
$$
\psi(x)=\frac1{2\pi}\sum_{k\in\mathbb{Z}^2_0}\psi_k e^{ik\cdot x},\qquad
\psi_k=\frac1{2\pi}\int_{\mathbb{T}^2}\psi(x)e^{-ik\cdot x}dx,\quad
\mathbb{Z}^2_0=\mathbb{Z}^2\setminus\{0,0\},
$$
so that
$$
\|\psi\|^2=\sum_{k\in\mathbb{Z}^2_0}|\psi_k|^2,
\qquad
\|\nabla\psi\|^2=\sum_{k\in\mathbb{Z}^2_0}|k|^2|\psi_k|^2.
$$
Then as before we have
$$
\|\nabla\psi\|^2=
\int_0^\infty \sum_{k\in\mathbb{Z}^2_0} f\biggl(\frac E{|k|^2}\biggr)^2|\psi_k|^2dE=
\int_{\mathbb{T}^2}\int_0^\infty|\psi^E(x)|^2dEdx,
$$
where
$$
\psi^E(x)=\frac1{2\pi}\sum_{k\in\mathbb{Z}^2_0} f\biggl(\frac E{|k|^2}\biggr)\psi_ke^{ik\cdot x},
$$
and therefore
$$
\psi(x)-\psi^E(x)=
\frac1{2\pi}\sum_{k\in\mathbb{Z}^2_0}
\left(1-f\biggl(\frac E{|k|^2}\biggr)\right)\psi_ke^{ik\cdot x}=
(\psi(\cdot), \chi^E(\cdot,x)),
$$
where
$$
\chi^E(x',x)=\frac1{2\pi}
\sum_{k\in\mathbb{Z}^2_0}
\left(1-f\biggl(\frac E{|k|^2}\biggr)\right)e^{ik\cdot x'}e^{-ik\cdot x}.
$$
With the  choice of $f$ given in \eqref{choice} and setting $a=\sqrt{\mu}E$ below, we have
\begin{equation}\label{tor}
\aligned
\|\chi^E(\cdot,x)\|^2=\frac1{4\pi^2}\sum_{k\in\mathbb{Z}^2_0}
\left(1-f\biggl(\frac E{|k|^2}\biggr)\right)^2=\\=
\frac1{4\pi^2}\sum_{k\in\mathbb{Z}^2_0}\frac1{\left(\left(\frac{|k|}{\sqrt{a}}\right)^4+1\right)^2}
<\frac a{16}=\frac\pi{64}E=:AE,
\endaligned
\end{equation}
where the key inequality for series is proved in the Appendix.

At this point we can complete the proof as in Theorem~\ref{Th:S2}.
\end{proof}

\subsection{Elongated torus.}
We now briefly discuss the Lieb--Thirring constant on a 2D torus
with aspect ratio $\alpha$. Since the Lieb--Thirring constant
depends only on $\alpha$, we consider the torus $\mathbb{T}^2_\alpha=[0,2\pi/\alpha]\times
[0,2\pi]$. Furthermore, it suffices to consider the case $\alpha\le1$, since otherwise
we merely interchange the periods.

\begin{theorem}\label{Th:alpha}
The Lieb--Thirring constant on the elongated torus $\mathbb{T}^2_\alpha$
satisfies the bound
\begin{equation}\label{alpha}
\mathrm{k}_\mathrm{LT}(\mathbb{T}^2_\alpha)\le\frac1\alpha\frac{3\pi}{32}\
\text{as}\ \alpha\to0.
\end{equation}
\end{theorem}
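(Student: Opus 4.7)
The plan is to reduce the elongated-torus estimate to Theorem~\ref{Th:1} for the standard square torus $\mathbb{T}^2=[0,2\pi]^2$ by an anisotropic rescaling, rather than repeating the Fourier computation of Section~\ref{sec3} on the rectangular dual lattice $\{(\alpha n_1,n_2):(n_1,n_2)\in\mathbb{Z}^2\}$. Concretely, for an orthonormal family $\{\psi_j\}_{j=1}^N\subset\dot H^1(\mathbb{T}^2_\alpha)$, I introduce the rescaled coordinate $\tilde x=(\alpha x_1,x_2)\in[0,2\pi]^2$ and set
$$
\tilde\psi_j(\tilde x):=\alpha^{-1/2}\,\psi_j(\tilde x_1/\alpha,\tilde x_2).
$$
The prefactor $\alpha^{-1/2}$ is precisely the Jacobian correction that makes $\{\tilde\psi_j\}$ orthonormal in $L_2(\mathbb{T}^2)$; periodicity and the zero-mean condition transfer trivially, so the hypotheses of Theorem~\ref{Th:1} on $\mathbb{T}^2$ are met by $\{\tilde\psi_j\}$.

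Next I track how the two sides of the Lieb--Thirring inequality transform under this change of variables. On the left,
$$
\int_{\mathbb{T}^2}\tilde\rho(\tilde x)^2\,d\tilde x=\frac{1}{\alpha}\int_{\mathbb{T}^2_\alpha}\rho(x)^2\,dx,
$$
while $\partial_{\tilde x_1}=\alpha^{-1}\partial_{x_1}$ and $\partial_{\tilde x_2}=\partial_{x_2}$ give
$$
\sum_{j=1}^N\|\tilde\nabla\tilde\psi_j\|_{L_2(\mathbb{T}^2)}^2
=\frac{1}{\alpha^2}\sum_{j=1}^N\|\partial_{x_1}\psi_j\|^2
+\sum_{j=1}^N\|\partial_{x_2}\psi_j\|^2.
$$
Applying the square-torus bound $\int\tilde\rho^2\le\tfrac{3\pi}{32}\sum\|\tilde\nabla\tilde\psi_j\|^2$, multiplying through by $\alpha$, and using $\alpha\le 1\le\alpha^{-1}$ to absorb the coefficient $\alpha$ in front of the $\partial_{x_2}$-sum, one arrives at $\int_{\mathbb{T}^2_\alpha}\rho^2\le\tfrac{3\pi}{32\alpha}\sum_j\|\nabla\psi_j\|^2$, which is the claim.

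There is no hard analytic step in this argument: the work is entirely bookkeeping of Jacobians, and the inequality $\alpha\le 1$ is invoked only once. The factor $1/\alpha$ in the conclusion is, however, built into the strategy: the smallest nonzero Laplace eigenvalue on $\mathbb{T}^2_\alpha$ is $\alpha^2$, so the thin-torus regime is essentially one-dimensional and dominated by the modes indexed by $(n_1,0)$. Removing or improving the $1/\alpha$ would seem to demand isolating these low modes and treating them by a separate one-dimensional Lieb--Thirring bound before gluing to the transverse modes; it is that gluing, rather than anything in the scaling argument above, that would be the real obstacle to a sharper estimate.
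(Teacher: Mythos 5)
Your argument is correct, and it takes a genuinely different route from the paper's. The paper fixes $k=1/\alpha\in\mathbb{N}$, extends each $\psi_j$ $k$ times periodically in $x_2$, multiplies by $\sqrt{\alpha}$ to preserve orthonormality, and lands on the large square torus $[0,2\pi k]^2$; the key point there is that $\int|\nabla\widetilde\psi_j|^2=\int|\nabla\psi_j|^2$ \emph{exactly}, while the quartic term picks up the factor $\alpha$. You instead perform an anisotropic dilation $\tilde x=(\alpha x_1,x_2)$ onto the fixed torus $[0,2\pi]^2$ with the Jacobian correction $\alpha^{-1/2}$, which deforms the gradient into $\alpha^{-2}\|\partial_{x_1}\psi_j\|^2+\|\partial_{x_2}\psi_j\|^2$, and then you pay with the crude bound $\alpha\le\alpha^{-1}$ on the $\partial_{x_2}$-block. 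Both give the same final constant $\tfrac{3\pi}{32\alpha}$. Your version has two real advantages: it dispenses entirely with the technical hypothesis $1/\alpha\in\mathbb{N}$ that the paper flags as an "additional assumption," so it proves the bound for \emph{all} $\alpha\le 1$; and the intermediate step
\begin{equation*}
\int_{\mathbb{T}^2_\alpha}\rho^2\le\frac{3\pi}{32}\sum_j\Bigl(\alpha^{-1}\|\partial_{x_1}\psi_j\|^2+\alpha\|\partial_{x_2}\psi_j\|^2\Bigr)
\end{equation*}
is an anisotropic refinement that is strictly sharper than the isotropic claim and makes the quasi-one-dimensional mechanism (the $\alpha^{-1}$ weight sitting only on the long direction) manifest. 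The paper's extension argument, by contrast, is lossless on the gradient side but gains nothing back because the quartic side already carries the full $1/\alpha$. Your closing heuristic about the near-zero modes $(n_1,0)$ is consistent with the paper's Remark showing via $\psi=\sin(2\pi\alpha x_1)$ that the $1/\alpha$ rate is sharp.
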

\begin{proof} We shall prove \eqref{alpha} under an additional technical assumption that
$k=1/\alpha\in\mathbb{N}$. Given the orthonormal family
$\{\psi_j\}_{j=1}^N \in\dot H^1(\mathbb{T}^2_\alpha)$, we extend each $\psi_j$
 by periodicity in the $x_2$ direction $k$ times, multiply the result by $\sqrt{\alpha}$
 and denote the resulting function defined on the square
 torus $\mathbb{T}^2=[0,2\pi k]^2$ by $\widetilde\psi_j$. Then the family
 $\{\widetilde\psi_j\}_{j=1}^N$ is orthonormal in $L_2(\mathbb{T}^2)$ and
 for $\rho_{\widetilde\psi}(x)=\sum_{j=1}^N|\widetilde\psi_j(x)|^2$
 and $\rho_{\psi}(x)=\sum_{j=1}^N|\psi_j(x)|^2$  it holds
 $$
 \int_{\mathbb{T}^2}\rho_{\widetilde\psi}(x)^2dx=
 \alpha\int_{\mathbb{T}^2_\alpha}\rho_{\psi}(x)^2dx,\qquad
 \int_{\mathbb{T}^2}|\nabla\widetilde\psi_j(x)|^2dx=
 \int_{\mathbb{T}^2_\alpha}|\nabla\psi_j(x)|^2dx,
 $$
 which gives \eqref{alpha}.
 \end{proof}
 \begin{remark}
{\rm
The rate of growth $1/\alpha$ of the Lieb--Thirring constant is sharp
as $\alpha\to0$. To see this we set $N=1$ and consider a function
on $\mathbb{T}^2_\alpha$ depending on the long coordinate $x_1$ only.
For example, let $\psi(x_1,x_2)=\sin(2\pi\alpha x_1)$.
Then $\|\psi\|^4_{L_4}\sim 1/\alpha$ $(=\frac{3\pi^2}{2\alpha})$,
$\|\psi\|^2_{L_2}\sim 1/\alpha$ $(=\frac{2\pi^2}{\alpha})$,
$\|\nabla\psi\|^2_{L_2}\sim \alpha$ $(=2\pi^2\alpha)$.
Therefore $\mathrm{k}_\mathrm{LT}(\mathbb{T}^2_\alpha)\succeq 1/\alpha$
$(\ge\frac1\alpha\frac3{8\pi^2})$.
}
\end{remark}
\begin{remark}
{\rm
The orthogonal complement to the  subspace of functions depending only on the long coordinate
$x_1$ consists of functions $\psi(x_1,x_2)$ with mean value
zero with respect to the short coordinate $x_2$:
\begin{equation}\label{T2cond}
\int_0^{2\pi}\psi(x_1,x_2)dx_2=0\quad\forall x_1\in[0,2\pi/\alpha].
\end{equation}
The Lieb--Thirring constant on this subspace is bounded uniformly
with respect to $\alpha$ as $\alpha\to0$. The similar
result holds for the multidimensional torus with different periods.
See \cite{I-L-MS} for the details.
}
\end{remark}

\begin{remark}
{\rm The lifting argument of \cite{Lap-Weid} was used in
\cite{I-L-MS} to derive the Lieb--Thirring inequalities on the
multidimensional with pointwise orthogonality condition of the type
\eqref{T2cond}.   It is not clear how to use the lifting argument
in the case of a global (and weaker) orthogonality contition
$\int_{\mathbb{T}^d}\psi(x)dx=0$.

Finally, we do not know whether the lifting argument
can in some form be used for the Lieb--Thirring inequalities
on the sphere, say, when going over from $\mathbb{S}^{d-1}$ to
$\mathbb{S}^{d}$. }
\end{remark}

\setcounter{equation}{0}
\section{Appendix. Estimates of the series}\label{sec4}

\subsection*{Estimate for the sphere.} The series estimated in \eqref{series1} is precisely of the type
\begin{equation}\label{G}
G(\nu):=\sum_{n=1}^\infty(2n+1)g\left(\nu\, n(n+1)\right),
\end{equation}
where $g$ is sufficiently smooth and sufficiently fast decays at
infinity. We need to find the asymptotic behavior of $G(\nu)$ as
$\nu\to0$. This has been done in~\cite{IZ} where the following
result was proved.
\begin{lemma}\label{L:E-M}
The following asymptotic expansion holds as $\nu\to0$:
\begin{equation}\label{as2}
G(\nu)=\frac1{\nu}\int_0^\infty g(t)dt-\frac23g(0)-
\frac1{15}\nu g'(0)+\frac4{315}\nu^2g''(0)+
O(\nu^3).
\end{equation}
\end{lemma}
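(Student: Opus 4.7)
The plan is to apply the Euler--Maclaurin summation formula to $h(n):=(2n+1)g(\nu n(n+1))$. The key observation is that $2n+1=\frac{d}{dn}[n(n+1)]$, so the substitution $t=\nu n(n+1)$ converts the integral of $h$ exactly into the leading singular term of the expansion:
$$
\int_0^\infty h(n)\,dn=\frac1\nu\int_0^\infty g(t)\,dt.
$$
Everything else then comes from the boundary-derivative corrections at $n=0$ in
$$
\sum_{n=1}^\infty h(n)=\int_0^\infty h(x)\,dx-\tfrac12 h(0)-\sum_{k=1}^{p}\frac{B_{2k}}{(2k)!}\,h^{(2k-1)}(0)+R_p.
$$

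Writing $\phi(n)=\nu n(n+1)$, only $\phi'(0)=\nu$ and $\phi''(0)=2\nu$ are nonzero; all higher derivatives of $\phi$ vanish at $0$. Hence Faà di Bruno's formula for $G(n)=g(\phi(n))$ yields, at each order, a finite polynomial in $\nu$: for example $G'(0)=\nu g'(0)$, $G''(0)=\nu^2g''(0)+2\nu g'(0)$, $G'''(0)=\nu^3 g'''(0)+6\nu^2 g''(0)$. The Leibniz identity $h^{(k)}(0)=G^{(k)}(0)+2k\,G^{(k-1)}(0)$ (since $(2n+1)''=0$) then gives $h(0)=g(0)$, $h'(0)=2g(0)+\nu g'(0)$, $h'''(0)=12\nu g'(0)+12\nu^2 g''(0)+\nu^3 g'''(0)$, and $h^{(5)}(0)$ whose leading $\nu^2$-contribution is $120\nu^2 g''(0)$ (obtained by counting set partitions of $\{1,\dots,5\}$ whose blocks have size at most $2$). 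Substituting $B_2=1/6$, $B_4=-1/30$, $B_6=1/42$ and gathering powers of $\nu$ produces the $g(0)$-coefficient $-\tfrac12-\tfrac16=-\tfrac23$, the $g'(0)\nu$-coefficient $-\tfrac1{12}+\tfrac1{60}=-\tfrac1{15}$, and the $g''(0)\nu^2$-coefficient $\tfrac1{60}-\tfrac1{252}=\tfrac4{315}$.

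To control $R_p$ with $p=3$ I would use the integral form of the Euler--Maclaurin remainder, bounding $|R_3|\le C\int_0^\infty|h^{(6)}(x)|\,dx$; Faà di Bruno together with the same substitution $t=\nu n(n+1)$ shows that this is $O(\nu^3)$ uniformly in small $\nu$, provided $g$ and sufficiently many of its derivatives decay fast enough at infinity. The main obstacle is not analytic but arithmetic bookkeeping: one must verify that the combinatorial factors multiplying $g'(0)$ in $h^{(3)}(0)$ and $g''(0)$ in both $h^{(3)}(0)$ and $h^{(5)}(0)$ combine with the Bernoulli numbers exactly to reproduce the fractions $-\tfrac1{15}$ and $\tfrac4{315}$; a missing contribution from $B_6$, or a miscount of the partitions in $h^{(5)}(0)$, would silently spoil the $\nu^2$ coefficient, so I would double-check this step symbolically.
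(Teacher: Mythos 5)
The paper itself does not prove this lemma; it cites \cite{IZ} for it. Your Euler--Maclaurin argument is the natural self-contained route, and the combinatorial bookkeeping producing the coefficients $-\frac23$, $-\frac1{15}$, $\frac4{315}$ is all correct: $h(0)=g(0)$, $h'(0)=2g(0)+\nu g'(0)$, $h'''(0)=12\nu g'(0)+12\nu^2g''(0)+\nu^3g'''(0)$, and the $\nu^2$-part of $h^{(5)}(0)$ equal to $120\nu^2g''(0)$ are all right, and these combine with $B_2,B_4,B_6$ exactly as you claim.

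However, there is a genuine gap in the remainder estimate. You assert that $\int_0^\infty|h^{(6)}(x)|\,dx=O(\nu^3)$, and this is false; it is only $O(\nu^2)$. To see this, note that with $\phi(x)=\nu x(x+1)$ one has $\phi'=\nu(2x+1)$, $\phi''=2\nu$, $\phi'''\equiv 0$, so Fa\`a di Bruno applied to $G=g\circ\phi$ and then Leibniz on $h=(2x+1)G$ shows that every term of $h^{(k)}$ has the form
$$
c\,\nu^{j}\,(2x+1)^{2j-k+1}\,g^{(j)}(\phi(x)),\qquad \lceil k/2\rceil\le j\le k .
$$
Changing variables $t=\phi(x)$ (so $dt=\nu(2x+1)\,dx$ and $(2x+1)^2=1+4t/\nu$) turns the corresponding piece of $\int_0^\infty|h^{(k)}|\,dx$ into
$$
\nu^{j-1}\int_0^\infty\bigl(1+4t/\nu\bigr)^{\,j-k/2}\,\bigl|g^{(j)}(t)\bigr|\,dt=O\!\left(\nu^{\,k/2-1}\right),
$$
\emph{uniformly} over $j$, because the extra negative powers of $\nu$ coming from $(1+4t/\nu)^{j-k/2}$ exactly cancel the excess powers in $\nu^{j-1}$. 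For $k=6$ this gives $O(\nu^2)$, not $O(\nu^3)$; the dominant piece is $j=3$, namely $c\,\nu^3(2x+1)g'''(\phi)$, whose integral is $\nu^2\int|g'''|\,dt$. Thus the remainder after the $B_6$-term is not small enough to justify the stated $O(\nu^3)$.

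The fix is simply to push Euler--Maclaurin one step further, to $p=4$. Then the new boundary term $-\frac{B_8}{8!}\,h^{(7)}(0)$ is $O(\nu^3)$ (since $G^{(7)}(0)=O(\nu^4)$ and $G^{(6)}(0)=O(\nu^3)$, hence $h^{(7)}(0)=G^{(7)}(0)+14G^{(6)}(0)=O(\nu^3)$), and the new remainder satisfies $|R_4|\le C\int_0^\infty|h^{(8)}(x)|\,dx=O(\nu^{8/2-1})=O(\nu^3)$ by the same calculation. All the coefficients up to $\nu^2$ are unchanged (the $B_8$-term only contributes from $\nu^3$ on), so with this one extra term the proposal becomes a complete proof. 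You should also state explicitly the decay/integrability hypotheses on $g$ and its first eight derivatives that make the integrals above finite.

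Also note one convention: the asymptotic expansion of $\sum_{n\geq 1}h(n)$ via Euler--Maclaurin with endpoint $0$ and with decay at infinity reads $\sum_{n=1}^\infty h(n)=\int_0^\infty h -\tfrac12 h(0)-\sum_{k\ge1}\frac{B_{2k}}{(2k)!}h^{(2k-1)}(0)+R_p$, which is what you used and is correct; I verified the signs.
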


The series in \eqref{series1} is of the form~\eqref{G}
with
$$
g(t)=\frac1{(1+t^2)^2}, \qquad \nu=\frac1{\sqrt{\mu}E},
$$
so that $g(0)=1$, $g'(0)=0$, $g''(0)=-4$ and $\int_0^\infty g(t)dt=\pi/4$. Therefore
\eqref{as2} gives
$$
\sum_{n=1}^\infty\frac
{(2n+1)}{\biggl({1+\left(\frac{n(n+1)}a\right)^2}\biggr)^2}=
    a\frac\pi 4-\frac23-\frac{16}{315}a^{-2}+O(a^{-3}),\ \text{as}\ a\to\infty,
$$
which shows that
inequality  \eqref{series1}
clearly holds for large energies $E>E_0$. The proof of inequality \eqref{series1}  for
all $E\in[0,\infty)$ amounts to showing that the inequality
$$
H_{\mathbb{S}^2}(a):=\frac4\pi\,a^3\sum_{n=1}^\infty\frac{2n+1}{\bigl(\bigl(n(n+1)\bigr)^2+a^2\bigr)^2}
\,<\,1, \quad a=\sqrt{\mu}E=\frac\pi 4E
$$
holds on a \emph{finite} interval $ a\in[0,a_0]$.
The value of $a_0$ (say, $20$) can be specified similarly to \cite{I12JST}.
Furthermore,  the sum of the series
can be found in an explicit form in terms of the (digamma) $\psi$-function.
The function $H_{\mathbb{S}^2}(a)$ and the third-order remainder term are shown in Fig.~\ref{fig:S2}.
\begin{figure}[htb]
\centerline{\psfig{file=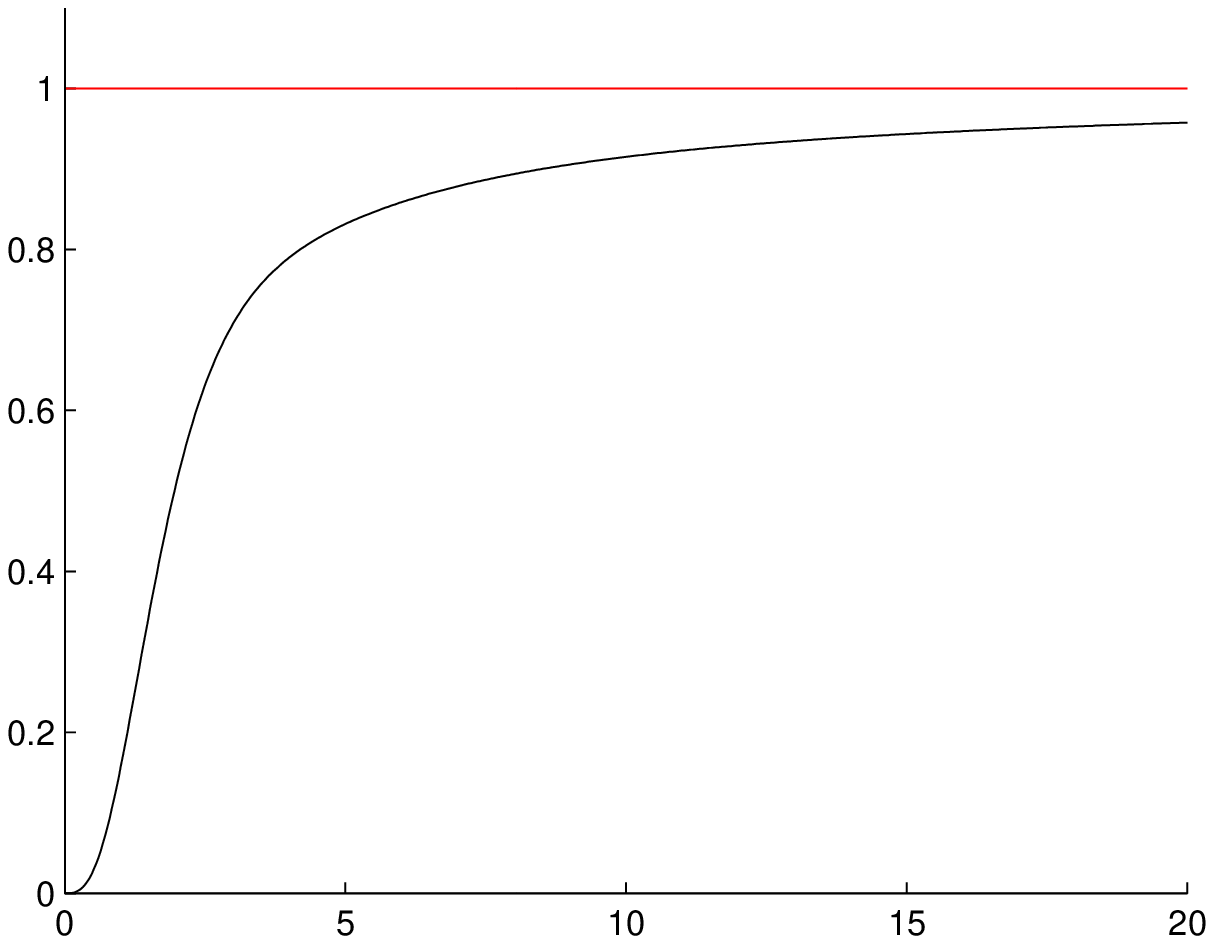,width=7.5cm,height=6cm,angle=0}
\psfig{file=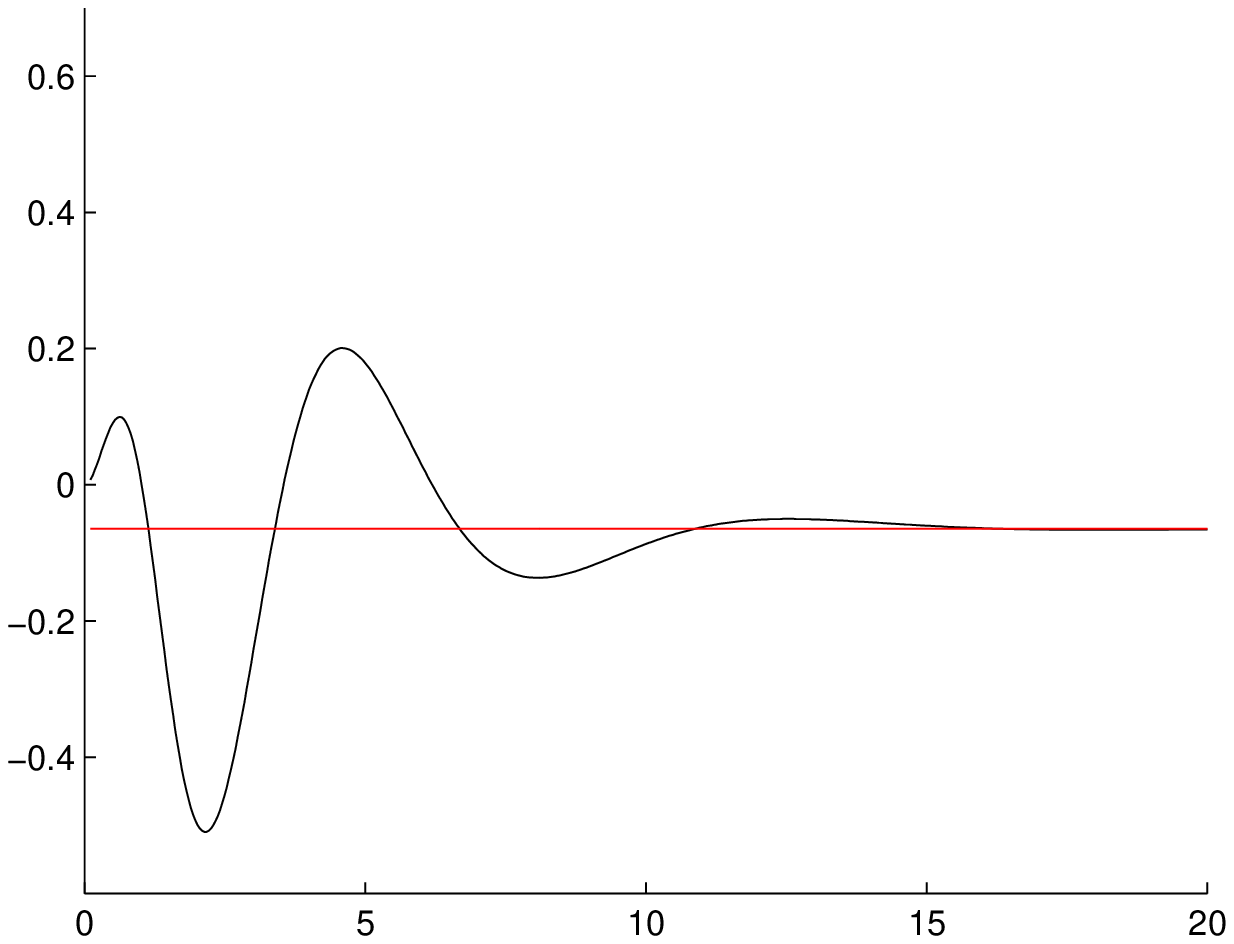,width=7.5cm,height=6cm,angle=0}}
\caption{The graph of $H_{\mathbb{S}^2}(a)$ is on the left;
the remainder term $\left(H_{\mathbb{S}^2}(a)-1-\frac8{3\pi a}\right)\cdot a^3$
is shown on the right, the horizontal red line is  $-64/(315\pi)=-0.064$.}
\label{fig:S2}
\end{figure}
\subsection*{Estimate for the torus.}

\begin{lemma}\label{L:Poisson}
The following asymptotic expansion holds as $a\to\infty$:
\begin{equation}\label{at2}
\sum_{k\in\mathbb{Z}^2_0}\frac1{\left(\left(\frac{|k|}{\sqrt{a}}\right)^4+1\right)^2}
=\frac{\pi^2}4 a-1+O(e^{-C\sqrt{a}}).
\end{equation}
\end{lemma}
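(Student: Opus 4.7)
The plan is to apply the Poisson summation formula to the Schwartz function
$$
F_a(x) := \frac{1}{\left(|x|^4/a^2+1\right)^2}
$$
on $\mathbb{R}^2$, with the convention $\widehat f(\xi) = \int_{\mathbb{R}^2} f(x)e^{-2\pi i\xi\cdot x}\,dx$. Isolating the $k=0$ term on the left and the $m=0$ term on the right, Poisson summation rewrites the sum in \eqref{at2} as
$$
\sum_{k\in\mathbb{Z}^2_0} F_a(k) \;=\; \widehat{F_a}(0) - F_a(0) + \sum_{m\in\mathbb{Z}^2_0}\widehat{F_a}(m) \;=\; \widehat{F_a}(0) - 1 + \sum_{m\in\mathbb{Z}^2_0}\widehat{F_a}(m).
$$
It remains to evaluate $\widehat{F_a}(0)$ exactly and to show that the oscillatory tail is $O(e^{-C\sqrt a})$.

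First I would compute the main term: in polar coordinates with the substitution $v=r^2/a$,
$$
\widehat{F_a}(0) \;=\; \int_{\mathbb{R}^2}F_a\,dx \;=\; 2\pi\int_0^\infty\frac{r\,dr}{(r^4/a^2+1)^2} \;=\; \pi a\int_0^\infty\frac{dv}{(v^2+1)^2} \;=\; \frac{\pi^2 a}{4},
$$
which reproduces the leading term of \eqref{at2} exactly.

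Next, to bound $\sum_{m\neq 0}\widehat{F_a}(m)$, I would show that $\widehat{F_a}(m)$ decays exponentially with rate $\sim\sqrt a$. Scaling $x=\sqrt a\,y$ gives $\widehat{F_a}(m)=a\,\widehat G(\sqrt a\,m)$ with $G(y):=1/(|y|^4+1)^2$, so it suffices to prove that $\widehat G$ decays exponentially. The natural approach is to shift the $y_1$-contour to $y_1=t\pm i\eta$, with the sign opposite to $\mathrm{sign}(m_1)$; the crucial point is to locate the singularities of $G$ in complex $y_1$ for real $y_2$. Writing $y_1=t+i\eta$, the equation $(y_1^2+y_2^2)^2=-1$ reduces to the simultaneous conditions $t^2+y_2^2=\eta^2$ and $2t\eta=\pm1$, which force $\eta\ge 1/\sqrt 2$. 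Hence $G$ extends holomorphically in $y_1$ to the strip $|\mathrm{Im}\,y_1|<1/\sqrt 2$, uniformly in $y_2\in\mathbb{R}$, and still decays like $|y_1|^{-8}$ at infinity within the strip, so the contour deformation is legitimate and yields
$$
|\widehat G(\xi)|\le C_\eta\,e^{-2\pi\eta|\xi_1|}\qquad\text{for every }0<\eta<1/\sqrt 2.
$$
By symmetry the same holds with $\xi_2$ in place of $\xi_1$; taking the geometric mean gives $|\widehat G(\xi)|\le C_\eta e^{-\pi\eta(|\xi_1|+|\xi_2|)}$. Substituting back, $|\widehat{F_a}(m)|\le C_\eta a\,e^{-\pi\eta\sqrt a(|m_1|+|m_2|)}$, and summing the resulting geometric series over $m\in\mathbb{Z}^2_0$ produces the desired $O(e^{-C\sqrt a})$ tail for any $C<\pi/\sqrt 2$.

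The main, and really only, technical obstacle is the uniform analyticity check: one must rule out complex singularities of $G$ in the chosen horizontal slab and verify that the lateral contributions at $|\mathrm{Re}\,y_1|\to\infty$ vanish so that the contour shift is valid. Once this is settled, combining the exact evaluation $\widehat{F_a}(0)=\pi^2 a/4$ with the exponentially small remainder produces precisely \eqref{at2}.
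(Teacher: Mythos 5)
Your proof is correct and uses exactly the paper's strategy: apply the Poisson summation formula, evaluate $\widehat{F_a}(0)=\pi^2 a/4$ exactly (this gives the leading term and the subtracted $F_a(0)=1$ gives the $-1$), and show the remaining lattice sum of Fourier coefficients is exponentially small by shifting contours into the domain of analyticity of $1/((x^2+y^2)^2+1)^2$. The only cosmetic difference is that you shift one variable at a time (finding the strip $|\mathrm{Im}\,y_1|<1/\sqrt2$ for real $y_2$) and then symmetrize by a geometric mean, whereas the paper shifts $x$ and $y$ simultaneously by $i\alpha$ with $\alpha<1/2$; both routes yield the same exponential decay and are equally valid.
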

\begin{proof}
We use the Poisson summation formula
(see, e.\,g., \cite{S-W}):
$$
\sum_{m\in\mathbb{Z}^d}f(m/\nu)=
(2\pi)^{d/2}\nu^d
\sum_{m\in\mathbb{Z}^d}\widehat{f}(2\pi m \nu),
$$
where
$\widehat{f}(\xi)=(2\pi)^{-d/2}\int_{\mathbb{R}^d}
f(x)e^{-i\xi x}dx$.

Taking into account that the term with $k=(0,0)$ is missing in
\eqref{at2}, the Poisson summation formula gives
\begin{equation}\label{expsmall}
\aligned
\sum_{k\in\mathbb{Z}^2_0}\frac1{\left(\left(\frac{|k|}{\sqrt{a}}\right)^4+1\right)^2}+1=\\=
a\int_{\mathbb{R}^2}\frac{dxdy}{((x^2+y^2)^2+1)^2}+2\pi a\sum_{k\in\mathbb{Z}^2_0}
\widehat h(2\pi\sqrt{a}|k|)=\frac{\pi^2}4a+O(e^{-C\sqrt{a}}),
\endaligned
\end{equation}
where $h(x,y)=\frac1{((x^2+y^2)^2+1)^2}$ is analytic
and therefore its Fourier transform
\begin{equation}\label{Fourier}
\widehat h(\xi)=\frac1{2\pi}\int_{\mathbb{R}^2}
e^{-ix\xi_1-iy\xi_2}h(x,y)dxdy
\end{equation}
is exponentially decaying.
\end{proof}

The graph of the function
\begin{equation}\label{lessthen1}
H_{\mathbb{T}^2}(a):=\frac4{\pi^2}\,a^3\sum_{k\in\mathbb{Z}^2_0}\frac{1}{\bigl(|k|^4+a^2\bigr)^2}
\,<\,1, 
\end{equation}
and the exponentially small remainder term
$$
R(a)=2\pi a\sum_{k\in\mathbb{Z}^2_0}
\widehat h(2\pi\sqrt{a}|k|)
$$
are shown in Fig.~\ref{fig:T2}
\begin{figure}[htb]
\centerline{\psfig{file=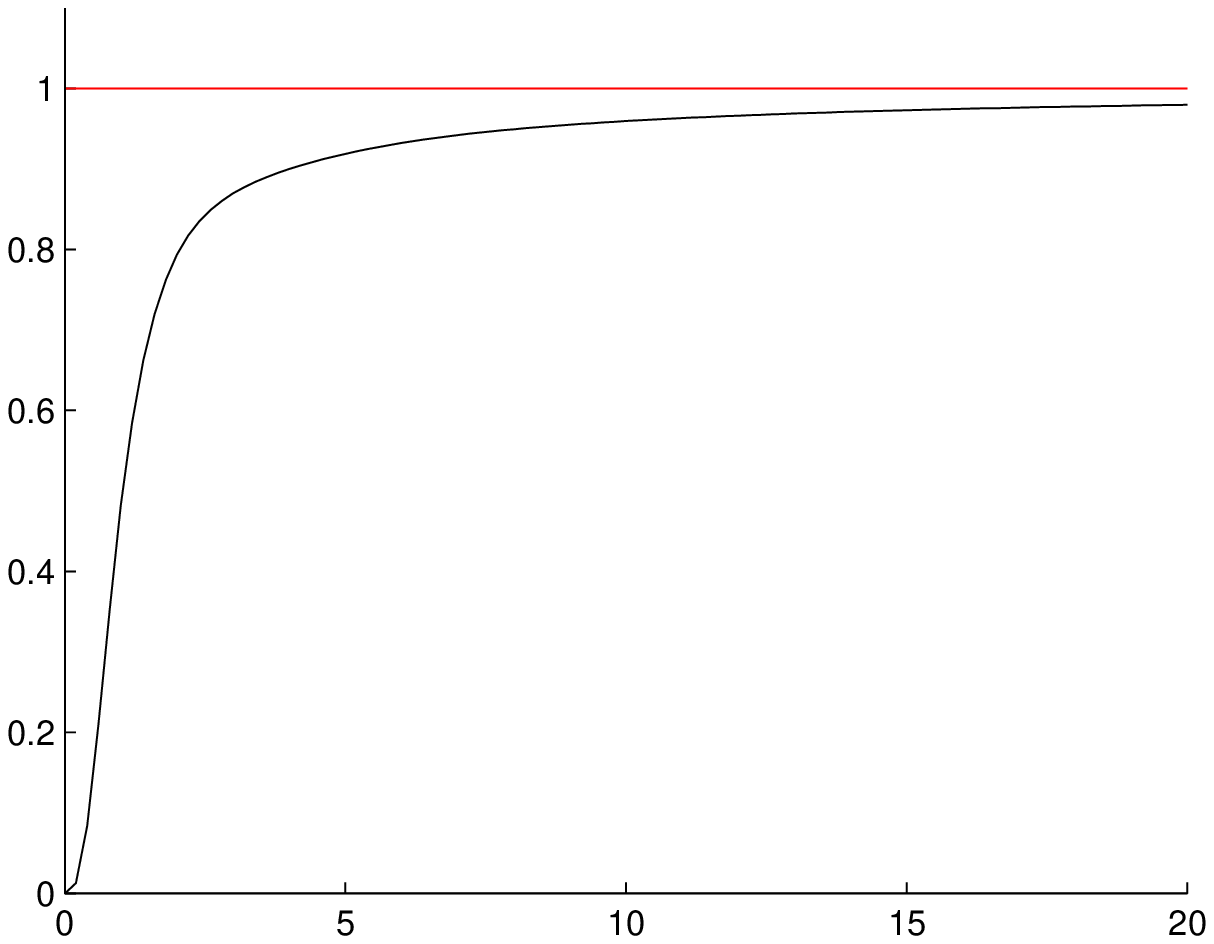,width=7.5cm,height=6cm,angle=0}
\psfig{file=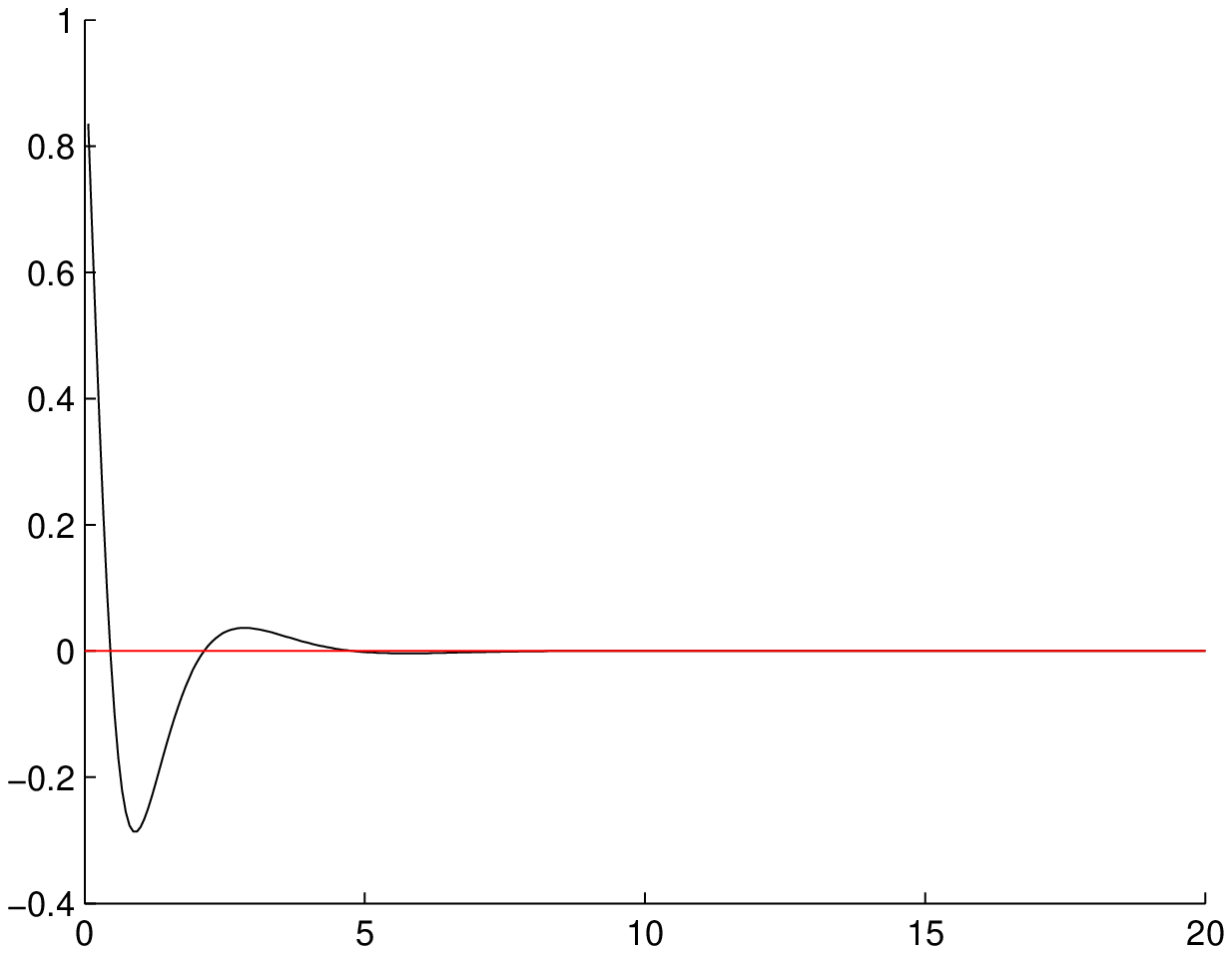,width=7.5cm,height=6cm,angle=0}}
\caption{The function $H_{\mathbb{T}^2}(a)$ is shown on the left;
the exponentially small term $R(a)$ is shown on the right.}
\label{fig:T2}
\end{figure}

We now give an explicit estimate for the exponentially small remainder term in~\eqref{expsmall}.
The function $h(z)$ is analytic in the domain $\Omega\subset\mathbb{C}^2$:
$\Omega=\{z\in \mathbb{C}^2,\ |\mathrm{Im} z_1|<\frac12,\ |\mathrm{Im} z_2|<\frac12\}$.
In fact, the equation
$$
(x+i\alpha)^2+(y+i\alpha)^2=\pm i
$$
has real solutions $x$ and $y$ only for $\alpha\ge\frac12$.

For $F(x,y,\alpha)=((x+i\alpha)^2+(y+i\alpha)^2)^2+1$ we have
$$
\aligned
&\mathrm{Re}\,F=
(x^2+y^2)^2-8\alpha^2(x^2+y^2+xy)+4\alpha^4+1\ge t^2-12\alpha^2t+4\alpha^4+1,\\
&\mathrm{Im}\,F=
4a(x+y)(x^2+y^2-2\alpha^2),\quad|\mathrm{Im}\,F|\le4\sqrt{2}\alpha t^{1/2}|t-2\alpha^2|,
\endaligned
$$
where $t:=x^2+y^2$. Next, by a direct inspection we verify that
for $t\ge0$
$$
\aligned
|F^2|\ge
 (\mathrm{Re}\,F)^2-(\mathrm{Im}\,F)^2=\\
(t^2-12\alpha^2t+4\alpha^4+1)^2-32\alpha^2t(t-2a^2)^2>\frac1b(t^4+1),
\endaligned
$$
where $\alpha=4.6^{-1}$ and $b=4.75$.
This gives that for $|\mathrm{Im} z_1|\le\alpha,\ |\mathrm{Im} z_2|\le\alpha$
$$
|h(x+i\alpha,y+i\alpha)|\le\frac b{(x^2+y^2)^4+1}\,.
$$
By the Cauchy integral theorem we can shift the $x$ and $y$ integration in \eqref{Fourier}
in the complex plane by $\pm i\alpha$ (depending on the sign of $\xi_1$ and $\xi_2$)  and find that
$$
|\widehat h(\xi)|\!\le\!\frac b{2\pi}e^{-(|\xi_1|+|\xi_2|)\alpha}\!\int_{\mathbb{R}^2}
\frac{dxdy}{(x^2+y^2)^4+1}\!=e^{-(|\xi_1|+|\xi_2|)\alpha}\frac{b\pi\sqrt{2}}8\le
e^{-\alpha|\xi|}\frac{b\pi\sqrt{2}}8\,.
$$

We write  the numbers $|k|^2$ over the lattice $\mathbb{Z}^2_0$
in non-decreasing order and denote them by $\{\lambda_j\}^\infty_{j=1}$.
Using that $\lambda_j\ge j/4$ (see \cite{I-L-AA}) and setting
$L:=\frac{\alpha\pi\sqrt{a}}2$ and $A:=\frac{\pi^2\sqrt{2}ab}4$ we
estimate the series in \eqref{expsmall} as follows
$$
\aligned
|R(a)|\le
2\pi a\sum_{k\in\mathbb{Z}^2_0}
|\widehat h(2\pi\sqrt{a}|k|)|=2\pi a\sum_{j=1}^\infty
|\widehat h(2\pi\sqrt{a}\lambda_j^{1/2})|\le\\
A\sum_{j=1}^\infty
e^{-2\pi\alpha\sqrt{a}\lambda_j^{1/2}}\le
A\sum_{j=1}^\infty
e^{-2Lj^{1/2}}=Ae^{-L}\sum_{j=1}^\infty e^{-L(2j^{1/2}-1)}\le\\
Ae^{-L}\sum_{j=1}^\infty e^{-Lj^{1/2}}<
Ae^{-L}\int_0^\infty e^{-L\sqrt{x}}dx=
Ae^{-L}\frac2{L^2}=\frac{2^{3/2}b}{\alpha^2}e^{-\frac{\alpha\pi\sqrt{a}}2}\,.
\endaligned
$$
Inequality~\eqref{lessthen1} holds if $R(a)<1$. The above estimates show that
$|R(a)|<1$ for
$$
a>\left[\frac2{\alpha\pi}\log\left(\frac{2^{3/2}b}{\alpha^2}\right)\right]^2=
273.8\,.
$$

A more optimistic estimate follows from the fact that $h(x)$ is radial and therefore
so is its Fourier transform
 $$\widehat h(\xi)=\int_0^\infty J_0(|\xi|r)h(r)rdr,
$$
where $J_0$ is the Bessel function. The latter integral is expressed in terms of the
Meijer G-function  and satisfies $|\widehat h(\xi)|<e^{-|\xi|/2}$. Similar estimates
give that $|R(a)|<1$ for
$$
a>\left[\frac4\pi\log\frac{64}\pi\right]^2=14.73\,.
$$

\subsection*{Acknowledgements}\label{SS:Acknow}
The work of A.\,I. and S.\,Z. is supported in part by the  Russian
Science Foundation grant 19-71-30004 (sections 1,2). Research of A.\,L. is
supported by the Russian Science Foundation grant 19-71-30002 (sections 3,4).

\end{document}